\newtheorem{theorem}{Theorem}[section]
\newtheorem{proposition}[theorem]{Proposition}
\newtheorem{lemma}[theorem]{Lemma}
\newtheorem{corollary}[theorem]{Corollary}
\newtheorem{claim}[theorem]{Claim}
\newtheorem*{question1}{Question A}
\newtheorem*{question2}{Question B}
\newtheorem*{main1}{Main Result A}
\newtheorem*{main2}{Main Result B}
\newtheorem*{open}{Open Question}
\theoremstyle{definition}
\newtheorem{definition}[theorem]{Definition}
\theoremstyle{remark}
\newtheorem{remark}[theorem]{Remark}
\numberwithin{equation}{section}
\begin{document}

\bibliographystyle{plain}
\title{Complex symmetric composition operators induced by linear fractional maps}

\author[Y.X. Gao and Z.H. Zhou] {Yong-Xin Gao and Ze-Hua Zhou$^*$}
\address{\newline  Yong-Xin Gao\newline School of Mathematics,
Tianjin University, Tianjin 300350, P.R. China.}

\email{tqlgao@163.com}

\address{\newline Ze-Hua Zhou\newline School of Mathematics, Tianjin University, Tianjin 300350,
P.R. China.}
\email{zehuazhoumath@aliyun.com; zhzhou@tju.edu.cn}

\keywords{Composition operator; Complex symmetry; Elliptic automorphism}
\subjclass[2010]{primary 47B33, 46E20; Secondary 47B38, 46L40, 32H02}

\date{}
\thanks{\noindent $^*$Corresponding author.\\
This work was supported in part by the National Natural Science
Foundation of China (Grant Nos. 11371276; 11301373; 11401426).}

\begin{abstract}
In this paper we give the answers to two open questions on complex symmetric composition operators. By doing this, we give a complete description of complex symmetric composition operators whose symbols are linear fractional.
\end{abstract}

\maketitle

\section{Introduction}

Let $T$ be a bounded operator on a complex Hilbert space $\mathcal{H}$. Then $T$ is called complex symmetric if there exists a conjugation $C$ such that $T=CT^*C$. Here a conjugation is a conjugate-linear, isometric involution on $\mathcal{H}$. Precisely speaking, a mapping $C:\mathcal{H}\to\mathcal{H}$ is called a conjugation on $\mathcal{H}$ if it satisfies the following conditions,

(i) $C(\lambda x+\mu y)=\overline{\lambda}Cx+\overline{\mu}Cy$ for all $x,y\in\mathcal{H}$ and $\lambda,\mu\in\mathbb{C}$;

(ii) $\langle Cx,Cy\rangle=\langle y,x\rangle$ for all $x,y\in\mathcal{H}$;

(iii) $C^2=I$ is the identity on $\mathcal{H}$.

The operator $T$ may also be called $C$-symmetric if $T$ is complex symmetric with respect to a specifical conjugation $C$. For more details about complex symmetric operators one may turn to \cite{CS1}, \cite{Cs2}, and \cite{Cs3}.

In this paper, we are particularly interested in the complex symmetry of composition operators induced by analytic self-maps of $D$. This subject was started by Garcia and Hammond in \cite{GH}.

Recall that for each analytic self-map $\varphi$ of the unit disk $D$, the composition operator $C_\varphi$ given by
$$C_\varphi f=f\comp\varphi$$
is always bounded on $H^2(D)$. Here, the Hardy-Hilbert space $H^2(D)$ is the set of analytic functions on $D$ such that
$$||f||^2=\sup_{0<r<1}\int_{0}^{2\pi}|f(re^{i\theta})|^2\frac{d\theta}{2\pi}<\infty.$$

Several simple examples of complex symmetric composition operators on $H^2(D)$ arise immediately. For example, every normal operator is complex symmetric ( see \cite{CS1}), so when $\varphi(z)=sz$ with $|s|\leqslant 1$, $C_\varphi$ is normal hence complex symmetric on $H^2(D)$. Also, Theorem 2 in \cite{Cs3} states that
each operator satisfying a polynomial equation of order $2$ is complex symmetric. So when $\varphi$ is an involutive automorphism, we have $C_\varphi^2=I$, thus $C_\varphi$ is complex symmetric on $H^2(D)$. In \cite{Invo} Waleed Noor found the conjugation $C$ such that $C_\varphi$ is $C$-symmetric when $\varphi$ is an involutive automorphism.

However, these examples are trivial to some extent. So people wondered if there exists any other example of complex symmetric composition operator on $H^2(D)$.
The first step is, of course, to determine the complex symmetric composition operators induced by automorphisms. Bourdon and Waleed Noor considered this problem in \cite{BN}. The next proposition is Proposition 2.1 in \cite{BN}.

\begin{proposition}\label{propa}
Let $\varphi$ be a self-map of $D$. If $C_\varphi$ is complex symmetric on $H^2(D)$, then $\varphi$ has a fixed point in $D$.

Particularly, if $\varphi$ is either a parabolic or a hyperbolic automorphism, then $C_\varphi$ cannot be complex symmetric on $H^2(D)$.
\end{proposition}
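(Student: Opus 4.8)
The plan is to argue by contradiction, using only one elementary consequence of complex symmetry together with the fact that $C_\varphi$ fixes the constant function $\mathbf 1$ (since $\mathbf 1\comp\varphi=\mathbf 1$). Suppose $C_\varphi=CC_\varphi^*C$ for a conjugation $C$. Left–multiplying by $C$ and using $C^2=I$ gives $CC_\varphi=C_\varphi^*C$; evaluating both sides at $\mathbf 1$ and using $C_\varphi\mathbf 1=\mathbf 1$ yields $C_\varphi^*(C\mathbf 1)=CC_\varphi\mathbf 1=C\mathbf 1$, while $C\mathbf 1\neq 0$ because $C$ is injective. Thus \emph{complex symmetry of $C_\varphi$ forces $1$ to be an eigenvalue of $C_\varphi^*$}. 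Consequently it suffices to prove the following claim: \textbf{if $\varphi$ has no fixed point in $D$, then $1$ is not an eigenvalue of $C_\varphi^*$}. Granting this, the last sentence of the proposition is immediate, since a parabolic or a hyperbolic automorphism of $D$ has all of its fixed points on $\partial D$.

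To prove the claim I would fix $g\in H^2(D)$ with $C_\varphi^*g=g$ and show $g=0$. The first input is the Denjoy–Wolff theorem: as $\varphi$ has no fixed point in $D$, its iterates $\varphi_n:=\varphi\comp\cdots\comp\varphi$ converge, uniformly on compact subsets of $D$, to a point $a\in\partial D$. The second input is that $C_\varphi^*g=g$ iterates to $C_\varphi^{*n}g=g$ for all $n$, so that for every polynomial $h$,
$$\langle g,h\rangle=\langle C_\varphi^{*n}g,h\rangle=\langle g,C_\varphi^n h\rangle=\langle g,h\comp\varphi_n\rangle .$$
Now I would let $n\to\infty$ on the right–hand side. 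Because $\varphi_n$ maps $D$ into $D$, the functions $h\comp\varphi_n$ are uniformly bounded in $H^2(D)$ by $\|h\|_{H^\infty(D)}$, and $h\comp\varphi_n(z)\to h(a)$ for each $z\in D$; a norm–bounded sequence in $H^2(D)$ that converges pointwise on $D$ converges weakly in $H^2(D)$ to its pointwise limit, so $h\comp\varphi_n\to h(a)\mathbf 1$ weakly. Hence $\langle g,h\rangle=\overline{h(a)}\,\langle g,\mathbf 1\rangle$ for every polynomial $h$.

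Finally, taking $h(z)=z^m$ shows that the $m$-th Taylor coefficient of $g$ equals $\overline a^{\,m}\langle g,\mathbf 1\rangle$ for all $m\geq 0$; equivalently $g=\langle g,\mathbf 1\rangle\,k_a$, where $k_a(z)=(1-\overline a z)^{-1}$. Since $|a|=1$ we have $k_a\notin H^2(D)$, which forces $\langle g,\mathbf 1\rangle=0$; then every Taylor coefficient of $g$ vanishes, so $g=0$. This proves the claim, and hence the proposition.

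The step I expect to be the genuine obstacle is this interchange of limits — the assertion that $h\comp\varphi_n\to h(a)\mathbf 1$ \emph{weakly} in $H^2(D)$. Uniform–on–compacts convergence of the symbols $\varphi_n$ does not by itself pass inside $\langle g,\cdot\rangle$; what makes the argument go through is the uniform $H^2$-bound $\|h\comp\varphi_n\|_{H^2}\leq\|h\|_{H^\infty(D)}$, available precisely because $\varphi_n(D)\subseteq D$. The remaining steps — the reduction via the constant function, and the extraction of Taylor coefficients — are formal. One could alternatively test against the reproducing kernels $k_w$ instead of polynomials, but then one must argue separately that $\{k_w\comp\varphi_n\}_n$ is $H^2$-bounded, which is less transparent; testing against polynomials, or more generally against $H^\infty$ functions with a nontangential limit at $a$, is the cleaner route.
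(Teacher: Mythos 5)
Your proof is correct. Note that the paper does not actually prove this proposition --- it is quoted from Bourdon and Waleed Noor (Proposition 2.1 of \cite{BN}) --- so there is no in-paper argument to compare against. Your route (the constant function shows $1$ is an eigenvalue of $C_\varphi^*$; then, if $\varphi$ had no interior fixed point, testing a putative fixed vector $g$ of $C_\varphi^*$ against $z^m\comp\varphi_n$ and using the Denjoy--Wolff theorem with the uniform bound $\|h\comp\varphi_n\|_{H^2}\leqslant\|h\|_{H^\infty}$ forces $g$ to be a multiple of the non-$H^2$ kernel $k_a$ at the boundary Denjoy--Wolff point, hence $g=0$) is a complete and essentially standard proof of the cited result, and the weak-convergence step you flag is handled correctly.
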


Thanks to this proposition, we only need to investigate the elliptic automorphisms of the unit disk $D$. It turns out that things depend much on the orders of the automorphisms. Most of the cases are settled by the following proposition, which is one of the main results in \cite{BN}.

\begin{proposition}\label{propb}
Let $\varphi$ be an elliptic automorphism of order $q$. If $q=2$, then $C_\varphi$ is always complex symmetric on $H^2(D)$. If $4\leqslant q\leqslant\infty$, then $C_\varphi$ is complex symmetric on $H^2(D)$ only if $\varphi$ is a rotation.
\end{proposition}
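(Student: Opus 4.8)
The plan is to settle $q=2$ in a line and then to treat $4\le q\le\infty$ by a single argument, which I can carry to completion for $q=\infty$ and which, for finite $q$, reduces to a bookkeeping problem. If $q=2$ then $\varphi$ is an involution, so $C_\varphi^2=C_{\varphi\circ\varphi}=I$; since $C_\varphi$ then satisfies a polynomial equation of degree $2$, it is complex symmetric by Theorem~2 of \cite{Cs3}, as already noted in the introduction. For $4\le q\le\infty$ I write $\varphi=\psi_p\circ\rho_\lambda\circ\psi_p$ with $\psi_p(z)=\frac{p-z}{1-\bar pz}$ and $\rho_\lambda(z)=\lambda z$, where $\lambda$ is a primitive $q$-th root of unity if $q<\infty$ and a unimodular non-root-of-unity if $q=\infty$; ``$\varphi$ is a rotation'' means exactly $p=0$. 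Conjugating $\varphi$ by a rotation replaces $C_\varphi$ by a unitarily equivalent composition operator whose symbol is again elliptic of order $q$ with fixed point $e^{i\theta}p$; as complex symmetry is a unitary invariant, I may assume $p\in[0,1)$, and the aim is to prove $p=0$.

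Assuming $C_\varphi=CC_\varphi^*C$ for a conjugation $C$, I would work with the bounded symmetric bilinear form $[f,g]:=\langle f,Cg\rangle$, for which the $C$-symmetry of $C_\varphi$ is equivalent to $[f\circ\varphi,g]=[f,g\circ\varphi]$ for all $f,g\in H^2$. The key input is the eigenfunction family $\psi_p^{\,n}$: from $\psi_p\circ\varphi=\lambda\psi_p$ one has $C_\varphi\psi_p^{\,n}=\lambda^n\psi_p^{\,n}$, hence $\lambda^m[\psi_p^{\,m},\psi_p^{\,n}]=\lambda^n[\psi_p^{\,m},\psi_p^{\,n}]$, so that $[\psi_p^{\,m},\psi_p^{\,n}]=0$ whenever $\lambda^m\ne\lambda^n$. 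Since $\psi_p$ is a disk automorphism, $C_{\psi_p}$ is bounded and invertible with $C_{\psi_p}^{-1}=C_{\psi_p}$, so $\{\psi_p^{\,n}\}$ is a Riesz basis of $H^2$; transporting through $C_{\psi_p}$ to the orthonormal basis $\{z^n\}$ and putting $C_0:=C_{\psi_p}^*\,C\,C_{\psi_p}$ (conjugate-linear), one has $C=C_{\psi_p}^*C_0C_{\psi_p}$, the form attached to $C_0$ is $B_0(z^m,z^n)=[\psi_p^{\,m},\psi_p^{\,n}]$, and therefore $C_0$ is diagonal in $\{z^n\}$ when $q=\infty$ and block-diagonal along residue classes modulo $q$ when $q<\infty$. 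With $S:=C_{\psi_p}C_{\psi_p}^*=(C_{\psi_p}^*C_{\psi_p})^{-1}$, the requirements that $C$ be an isometric involution become the operator identity $C_0SC_0=S^{-1}$ together with $\langle C_0u,SC_0v\rangle=\langle v,S^{-1}u\rangle$ for all $u,v\in H^2$.

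For $q=\infty$ this finishes the argument. Writing $d_n:=[\psi_p^{\,n},\psi_p^{\,n}]$, one has $C_0z^n=\overline{d_n}\,z^n$, so the diagonal of the last relation reads $|d_n|^2S_{nn}=(S^{-1})_{nn}$, with $(S^{-1})_{nn}=\|C_{\psi_p}z^n\|^2=\|\psi_p^{\,n}\|^2=1$ because $\psi_p^{\,n}$ is inner; the $(0,1)$-entry reads $\overline{d_0}\,d_1\,S_{10}=(S^{-1})_{01}$. A short computation with reproducing kernels (using $C_\varphi^*K_w=K_{\varphi(w)}$ and $(C_{\psi_p}^*z)(w)=-(1-p^2)\,w\,(1-pw)^{-2}$, the latter from evaluating $\langle z,K_w\circ\psi_p\rangle$) gives $S_{00}=\|K_p\|^2=(1-p^2)^{-1}$, $(S^{-1})_{01}=\psi_p(0)=p$, $S_{10}=\langle K_p,C_{\psi_p}^*z\rangle=-p(1-p^2)^{-1}$, and $S_{11}=\|C_{\psi_p}^*z\|^2=(1+p^2)(1-p^2)^{-1}$. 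These force $|d_0|^2=1-p^2$ and $\overline{d_0}\,d_1=-(1-p^2)$, hence $|d_1|^2=1-p^2$; yet the diagonal relation at $n=1$ also forces $|d_1|^2=S_{11}^{-1}=(1-p^2)/(1+p^2)$. The two values agree only when $p=0$, so $\varphi$ is a rotation.

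For $4\le q<\infty$ the same scheme applies, except that now $C_0$ is only block-diagonal, so the scalar identities above must be replaced by relations among the compressions $P_jSP_j$, $P_jS^{-1}P_j$ and the blocks $C_0^{(j)}$ of $C_0$; in particular the off-block entries $S_{mn}$ with $m\equiv n\pmod q$ no longer vanish and there is no closed formula for $C_0^{(j)}$. The hard part will be to extract a numerical contradiction from this block system while keeping control of those surviving entries, and this is precisely where the hypothesis $q\ge4$ is needed rather than $q\ge3$: for $q=3$ the constraints gain just enough freedom to be satisfiable, which is why the order-$3$ case falls outside this proposition and must be handled separately.
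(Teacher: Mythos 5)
This proposition is imported from Bourdon--Waleed Noor \cite{BN} and is not proved in the paper, so there is no internal proof to compare against; judging your argument on its own terms, it is incomplete. The case $q=2$ is fine (it is the standard $C_\varphi^2=I$ argument via Theorem 2 of \cite{Cs3}), and your treatment of $q=\infty$ is a genuine, complete proof: the reduction to $p\in[0,1)$, the vanishing $[\psi_p^{\,m},\psi_p^{\,n}]=0$ for $\lambda^m\ne\lambda^n$, the diagonality of the transported conjugate-linear map $C_0$, and the numerical values $S_{00}=(1-p^2)^{-1}$, $S_{11}=(1+p^2)(1-p^2)^{-1}$, $S_{10}=-p(1-p^2)^{-1}$, $(S^{-1})_{nn}=1$, $(S^{-1})_{01}=p$ all check out, and they do force $1+p^2=1$ when $p\ne 0$. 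That part I would accept.

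The gap is the entire range $4\leqslant q<\infty$, which is an infinite family of cases and is not an optional refinement of the statement. For finite $q$ your operator $C_0$ is only block-diagonal along residue classes mod $q$, so $C_0z^0$ and $C_0z^1$ are unknown vectors $\sum_j\overline{c_j}z^{qj}$ and $\sum_j\overline{b_j}z^{qj+1}$ rather than scalars, and the three scalar identities you exploit at $q=\infty$ become bilinear relations in infinitely many unknowns involving the nonvanishing entries $S_{qj+m,\,qk+n}$. You state yourself that ``the hard part will be to extract a numerical contradiction from this block system''; that hard part is the proof, and nothing in your write-up indicates how to control those surviving entries or why the system is inconsistent for every $p\ne 0$. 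Until that is carried out, the proposition is only established for $q\in\{2,\infty\}$. A secondary caution: your closing remark that for $q=3$ ``the constraints gain just enough freedom to be satisfiable'' should at most be a claim about your particular subsystem of necessary conditions, since by Theorem \ref{main} of this paper $C_\varphi$ is in fact \emph{not} complex symmetric for non-rotation elliptic automorphisms of order $3$ either; as written the sentence suggests the wrong conclusion about the operator itself.
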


However, the order 3 elliptic case remains as an open question, which is posed by Bourdon and Waleed Noor in \cite{BN}:

\begin{question1}
Is $C_\varphi$ complex symmetric on $H^2(D)$ when $\varphi$ is an elliptic automorphism of order $3$?
\end{question1}

We will give the answer to this question in the first part of this paper. By doing this we complete the project of finding all invertible composition operators which are complex symmetric on $H^2(D)$. More precisely, we will prove that if $\varphi$ is an elliptic automorphism of order $3$ and not a rotation, then the composition operator $C_\varphi$ cannot be complex symmetric on $H^2(D)$. Thus we can get our Main Result A as follows. It will be given as Corollary \ref{last} in the third section.

\begin{main1}
Suppose $\varphi$ is an automorphism of the unit disk $D$. Then $C_\varphi$ is complex symmetric on $H^2(D)$ if and only if $\varphi$ is either a rotation or an involutive automorphism.
\end{main1}

This result shows that one can never find any nontrivial example of complex symmetric composition operator among the automorphisms of $D$. So we continue to consider the non-automorphisms.

Sungeun Jung et al \cite{JFA} posed a non-automorphic example of composition operator which was thought to be complex symmetric, but then it was disproved by Waleed Noor in \cite{Cor}. Recently, Narayan et al \cite{Arxiv} gave the first non-automorphic examples of complex symmetric composition operators. Their examples are as follows, which is Theorem 2.10 in \cite{Arxiv}.

\begin{proposition}\label{state}
Let $\varphi_1(z)=az+c$ and $\varphi_2(z)=az/(1-cz)$ be analytic self-maps of $D$ and neither of them is an automorphism of $D$. Then $C_{\varphi_1}$, respectively $C_{\varphi_2}$, is complex symmetric on $H^2(D)$ if and only if $\varphi_1$, respectively $\varphi_2$, has no fixed point on the boundary of $D$.
\end{proposition}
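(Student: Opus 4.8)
The plan is to prove both equivalences in parallel, organizing everything around the location of the symbol's fixed points on the Riemann sphere (the degenerate case $a=0$ is immediate, since then $C_{\varphi_j}$ has rank one, hence is complex symmetric, and $\varphi_j$ is a constant fixing only an interior point). The fixed points of $\varphi_1$ are $p_1=c/(1-a)$ and $\infty$, and those of $\varphi_2$ are $0$ and $q=(1-a)/c$. I would first record the elementary constraints forced by the hypotheses: since $\varphi_j$ maps $D$ into $D$ but is not an automorphism, $|a|<1$ and $|a|+|c|\le 1$, so that $|c|\le 1-|a|\le|1-a|$. Reading this off, ``$\varphi_j$ has a fixed point on $\partial D$'' is equivalent to $|c|=|1-a|$, and in that case $p_1\in\partial D$ while $\infty$ is the only other fixed point of $\varphi_1$, whereas $\varphi_2$ retains its interior fixed point $0$ alongside $q\in\partial D$.

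For the implication ``$C_{\varphi_j}$ complex symmetric $\Rightarrow$ no boundary fixed point'' I would argue by contraposition. If $\varphi_1$ has a boundary fixed point, its fixed points are $p_1\in\partial D$ and $\infty$, so $\varphi_1$ has no fixed point in $D$ and Proposition~\ref{propa} already shows $C_{\varphi_1}$ is not complex symmetric. For $\varphi_2$ this is not available, since $0$ remains an interior fixed point and Proposition~\ref{propa} is silent; here I would use eigenvalues. A boundary fixed point forces $0<|a|<1$ and $|c|=|1-a|$, and by Koenigs' theorem every $f\in H^2$ with $f\circ\varphi_2=af$ is a scalar multiple of the Koenigs map $\sigma(z)=(1-a)z/\big((1-a)-cz\big)$, whose pole now sits at $q=(1-a)/c\in\partial D$, so $\sigma\notin H^2$ and $\ker(C_{\varphi_2}-a)=\{0\}$. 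On the other hand, $\varphi_2(0)=0$ and $\varphi_2'(0)=a$ give $C_{\varphi_2}^*z=\bar a z$ by a one-line computation on monomials, so $\dim\ker(C_{\varphi_2}^*-\bar a)\ge 1$. Since a complex symmetric operator $T$ obeys $\dim\ker(T-\lambda)=\dim\ker(T^*-\bar\lambda)$ for every $\lambda$, this is a contradiction, and $C_{\varphi_2}$ is not complex symmetric.

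For the converse — the substantive half — suppose $\varphi_j$ has no boundary fixed point, hence an interior fixed point $p$ ($p=c/(1-a)$ for $\varphi_1$, $p=0$ for $\varphi_2$), and I must produce a conjugation $C$ with $C_{\varphi_j}=CC_{\varphi_j}^*C$. The first move is to normalize $p$ to $0$: conjugating by the self-adjoint unitary weighted composition operator $U_pf=\frac{(1-|p|^2)^{1/2}}{1-\bar p z}(f\circ\alpha_p)$, with $\alpha_p$ the involution interchanging $0$ and $p$, carries $C_{\varphi_j}$ to a weighted composition operator $W_{w,\psi}=M_wC_\psi$ in which $\psi$ is a linear fractional self-map fixing $0$ with $\psi'(0)=a$, necessarily of the form $\psi(z)=az/(1-\nu z)$ with $|\nu|<|1-a|$, and $w$ is an explicit ratio of degree-one polynomials (for $\varphi_2$ one simply keeps $p=0$, so $W_{w,\psi}=C_{\varphi_2}$). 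As complex symmetry is a unitary invariant, it remains to show $W_{w,\psi}$ is complex symmetric, and for this I would exhibit the conjugation explicitly — of the form $C=M_\Omega\circ\mathcal J\circ C_\beta$ with $M_\Omega$ multiplication by a suitable outer function, $\mathcal Jf(z)=\overline{f(\bar z)}$, and $\beta$ an automorphism adapted to $\psi$ — and then check the identity $CW_{w,\psi}C=W_{w,\psi}^*$ by evaluating both sides on the reproducing kernels $K_w$ and using Cowen's adjoint formula, which for these symbols reads $C_{\varphi_1}^*=T_{1/(1-\bar c z)}C_{\bar a z/(1-\bar c z)}$ and $C_{\varphi_2}^*=C_{\bar a z+\bar c}\,T_{1-cz}^*$; the verification then reduces to a rational functional identity, the hypothesis $|c|\ne|1-a|$ entering to ensure that $\Omega$ is a bounded outer function (this is the only place the absence of a boundary fixed point is used here). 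Alternatively one may invoke the Garcia--Hammond classification of complex symmetric weighted composition operators on $H^2$ and check that $W_{w,\psi}$ satisfies its hypotheses. The hard part is exactly this last step — guessing the right $\Omega$ and $\beta$, i.e.\ identifying the conjugation that $C_{\varphi_j}$ respects — since the obvious guesses fail: the standard conjugation $\sum b_nz^n\mapsto\sum\overline{b_n}z^n$ symmetrizes $C_{\varphi_2}$ only when $c=0$, and a dressed-up version $M_u\mathcal J$ forces $u$ to be a unimodular constant. Once $C$ is written down, the remaining verification is a finite, if tedious, computation.
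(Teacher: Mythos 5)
Your treatment of the necessity direction is correct, and for $\varphi_2$ it is essentially the argument this paper itself records in Corollary~\ref{yg}: when the second fixed point $(1-a)/c$ lies on $\partial D$, the Koenigs eigenfunction $\sigma(z)=z/(1-\eta z)$, $\eta=c/(1-a)$, falls outside $H^2(D)$, so $\ker(C_{\varphi_2}-a)=\{0\}$, while $(\ref{*2})$ makes $\overline a$ an eigenvalue of $C_{\varphi_2}^*$, contradicting Lemma~\ref{zh}; the reduction of the $\varphi_1$ case to Proposition~\ref{propa} is likewise fine. Bear in mind, though, that the paper does not prove Proposition~\ref{state} at all --- it is imported verbatim from \cite{Arxiv} --- so the necessity half is the only portion of your argument that has a counterpart in the text.

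The sufficiency half, which is the substantive content of the statement, is not actually proved in your proposal. After reducing to a weighted composition operator $W_{w,\psi}$ you announce a conjugation of the form $C=M_\Omega\circ\mathcal J\circ C_\beta$, but $\Omega$ and $\beta$ are never produced; you observe yourself that the natural candidates fail ($\mathcal J$ works only for $c=0$, and $M_u\mathcal J$ forces $u$ constant), which is evidence that no conjugation of this multiplicative shape need exist, not merely that it is hard to guess. Consequently the assertion that the hypothesis $|c|\neq|1-a|$ ``enters only to make $\Omega$ bounded and outer'' is unsupported. The fallback of invoking a ``Garcia--Hammond classification'' is not available either: \cite{GH} (and \cite{JFA}) characterize symmetry only with respect to specific conjugations of exactly the type you have already discarded, not with respect to arbitrary conjugations. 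The actual proof in \cite{Arxiv} takes a different route: since the second fixed point lies off $\overline D$, the eigenvectors of $C_\varphi$ (powers of the Koenigs map, resp.\ of $z-p$) and the corresponding eigenvectors of $C_\varphi^*$ each form complete systems, one verifies that the two Gram matrices are complex conjugates of one another, and the conjugation is then defined abstractly by matching these spanning families --- a computation your outline never reaches. As written, the forward implication is a plan whose key ingredient is missing.
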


Yet their examples are linear fractional. Then they asked at the last part of their paper: What about the other linear fractional self-maps of $D$? Can any of them induces a complex symmetric composition operator?

\begin{question2}
Find all the linear fractional self-maps of $D$ that can induce complex symmetric composition operators on $H^2(D)$.
\end{question2}

In this paper, we also answer this question by giving a complete description of complex symmetric composition operators whose symbols are linear fractional. More precisely, we show that the examples in \cite{Arxiv} are the only non-automorphic examples that can be found among the linear fractional maps. Our second main result is as follows, which is Theorem \ref{main2} in the fourth section.

\begin{main2}
Suppose $\varphi$ is a linear fractional self-maps of $D$ and is not a constant. Then $C_\varphi$ is complex symmetric on $H^2(D)$ if and only if at least one of the following conditions holds,
\\
(i) the two fixed points of $\varphi$ on $\hat{\mathbb{C}}$ are $0$ and a point outside $\overline{D}$;
\\
(ii) the two fixed points of $\varphi$ on $\hat{\mathbb{C}}$ are $\infty$ and a point in $D$;
\\
(iii) $\varphi$ is an involutive automorphism.
\end{main2}

\section{Preliminary}

The space we considered throughout this paper is the Hardy space $H^2(D)$. It is a Hilbert space, with the inner product
$$\langle f,g\rangle=\sup_{0<r<1}\int_{0}^{2\pi}f(re^{i\theta})\overline{g(re^{i\theta})}\frac{d\theta}{2\pi}.$$
For each $w\in D$, let
$$K_w(z)=\frac{1}{1-\overline{w}z}.$$
Then $K_w\in H^2(D)$ is the evaluation functional at the point $w$, i.e., $$\langle f,K_w\rangle=f(w)$$ for all $f\in H^2(D)$. Futhermore, for each $w\in D$ and every integer $j>0$, we can find a unique function $K_w^{[j]}\in H^2(D)$ such that
$$\langle f,K_w^{[j]}\rangle=f^{(j)}(w)$$ for all $f\in H^2(D)$. The function $K_w^{[j]}$ is called evaluation of the $j$-th derivative at $w$.
\\

A linear fractional transformation is a map of the form
$$\varphi(z)=\frac{az+b}{cz+d}.$$
We always assume $ad-bc\ne 0$, so that $\varphi$ is not a constant. Each linear fractional transformation can be regarded as a biholomorphic mapping of $\hat{\mathbb{C}}$ onto itself. Here $\hat{\mathbb{C}}=\mathbb{C}\cup\{\infty\}$ is the Riemann sphere. We denote the set of all linear fractional transformations by $LFT(\hat{\mathbb{C}})$. Recall that each linear fractional transformation has at most two fixed points on $\hat{\mathbb{C}}$.

By $LFT(D)$ we denote the set of all linear fractional self-maps of the unit disk $D$. It is a subgroup of $LFT(\hat{\mathbb{C}})$. The automorphisms of $D$ are all contained in $LFT(D)$.

It is well known that the automorphisms of $D$ fall into three categories:

$\bullet$ An automorphism $\varphi$ is called elliptic if it has a unique fixed point in $D$.

$\bullet$  $\varphi$ is called hyperbolic if it has no fixed point in $D$ and two distinct fixed points on the unit circle $\partial D$.

$\bullet$ $\varphi$ is called parabolic if it has no fixed point in $D$ and one fixed point on $\partial D$.

Proposition \ref{propa} tells us that when $\varphi$ is either hyperbolic or parabolic, the composition operator $C_\varphi$ cannot be complex symmetric on $H^2(D)$. So we only need to take care of the elliptic ones.

\begin{definition}
The order of an elliptic automorphism $\varphi$ is the smallest positive integer such that $\varphi_n(z)=z$ for all $z\in D$. Here $\varphi_n=\varphi\comp\varphi\comp...\comp\varphi$ denotes the $n$-th iterate of $\varphi$. If no such positive integer exists, then $\varphi$ is said to have order $\infty$.
\end{definition}

Note that if the order of an automorphism $\varphi$ is one, then $\varphi$ is identity on $D$. If $\varphi$ has order two, then $\varphi$ is of the form
$$\varphi(z)=\varphi_a(z)=\frac{a-z}{1-\overline{a}z}$$
for some $a\in D$.
\begin{remark}
The notation $\varphi_a$ will be used throughout this paper. $\varphi_a$ is the involution automorphism exchanges $0$ and $a$.
\end{remark}

For the convenience of our discussion, we restate Proposition \ref{state}, i.e., Theorem 2.10 in \cite{Arxiv}, as follows.

\begin{proposition}\label{he}
Suppose $\varphi\in LFT(D)$ is not an automorphism and either $0$ or $\infty$ is a fixed point of $\varphi$. Then $C_\varphi$ is complex symmetric on $H^2(D)$ if and only if $\varphi$ has no fixed point on the unit circle $\partial D$.
\end{proposition}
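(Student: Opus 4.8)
The plan is to deduce Proposition \ref{he} from Proposition \ref{state} by putting $\varphi$ into one of the two normal forms appearing there; since this is merely a restatement of Theorem 2.10 of \cite{Arxiv}, the only work is to identify those normal forms.

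First suppose $\infty$ is a fixed point of $\varphi$. Writing $\varphi(z)=(az+b)/(cz+d)$ with $ad-bc\ne 0$, the equation $\varphi(\infty)=\infty$ forces $c=0$, so $\varphi(z)=(a/d)z+b/d$; after renaming coefficients, $\varphi(z)=az+c$ with $a\ne 0$. This is exactly a map of the type $\varphi_1$ in Proposition \ref{state}: it is a self-map of $D$ because $\varphi\in LFT(D)$, and it is not an automorphism by hypothesis. (These two hypotheses already force $|a|<1$, since if $|a|\ge 1$ then the image of $\overline{D}$ under $\varphi$ is a closed disc of radius $|a|$ centered at $c$, which can lie inside $\overline{D}$ only when $|a|=1$ and $c=0$, i.e.\ $\varphi$ is a rotation, contrary to hypothesis.) Hence Proposition \ref{state} applies and yields the stated equivalence. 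Symmetrically, if $0$ is a fixed point of $\varphi$ then $\varphi(0)=0$ forces $b=0$; since $ad-bc=ad\ne 0$ we get $d\ne 0$, and dividing numerator and denominator by $d$ gives $\varphi(z)=az/(1-cz)$ with $a\ne 0$ after renaming, a map of the type $\varphi_2$ in Proposition \ref{state}, to which that proposition again applies. The overlap case $\varphi(z)=az$ with $0<|a|<1$ is subsumed under the second form with $c=0$; it is consistent with the statement, since then the only fixed points are $0$ and $\infty$, neither on $\partial D$, while $C_\varphi$ is normal and hence complex symmetric.

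Finally, ``fixed point on the boundary of $D$'' in Proposition \ref{state} is the same as ``fixed point on $\partial D$'' here, so no further translation of the conclusion is needed. The only step deserving any care---and the one I would double-check---is that the passage to these two normal forms is genuinely exhaustive: that the degenerate possibilities ($d=0$, $c=0$, or $a=1$) are correctly disposed of, and that none of them escapes the standing hypotheses of Proposition \ref{state}.
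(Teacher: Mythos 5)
Your proposal is correct and matches the paper's treatment: the paper offers no proof of Proposition \ref{he} at all, explicitly presenting it as a mere restatement of Proposition \ref{state} (Theorem 2.10 of \cite{Arxiv}), and your normal-form reduction ($c=0$ when $\infty$ is fixed giving $az+c$; $b=0$ when $0$ is fixed giving $az/(1-cz)$) together with the checks on the degenerate cases is exactly the routine verification that justifies that restatement.
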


The next lemma is a simple property of complex symmetric operators. It will be used repeatedly in this paper.

\begin{lemma}\label{zh}
Suppose $T$ is complex symmetric on $\mathcal{H}$ with respect to a conjugation $C$, then $\lambda\in\mathbb{C}$ is a eigenvalue of $T$ if and only if $\overline{\lambda}$ is a eigenvalue of $T^*$. Moreover, the conjugation $C$ maps the eigenvectors subspace $\mathrm{Ker}(T-\lambda)$ onto $\mathrm{Ker}(T^*-\overline{\lambda})$.
\end{lemma}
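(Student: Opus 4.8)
The plan is to argue directly from the definition of $C$-symmetry, so that the whole statement becomes a short formal computation. Since $T=CT^*C$ and $C^2=I$, the relation can equivalently be written as $T^*=CTC$; this symmetric form is what I would actually use. I expect no serious obstacle here — the only points requiring a moment's care are that $C$ is conjugate-linear rather than linear, so ``maps onto'' must be read as a set-theoretic (equivalently, conjugate-linear) bijection, and that one should check explicitly that a nonzero eigenvector is not sent to $0$.

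First I would fix $\lambda\in\mathbb{C}$ and take a nonzero $x\in\mathrm{Ker}(T-\lambda)$, so that $Tx=\lambda x$. Applying $T^*=CTC$ and using $C^2=I$ together with condition (i),
$$T^*(Cx)=CTC(Cx)=CTx=C(\lambda x)=\overline{\lambda}\,Cx.$$
By condition (ii) with $y=x$ we have $\langle Cx,Cx\rangle=\langle x,x\rangle\ne 0$, so $Cx\ne 0$. Hence $Cx\in\mathrm{Ker}(T^*-\overline{\lambda})$, which shows that $C$ carries $\mathrm{Ker}(T-\lambda)$ into $\mathrm{Ker}(T^*-\overline{\lambda})$.

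Next I would run the symmetric argument: $T^*$ is itself $C$-symmetric, since $CT^{**}C=CTC=T^*$, so the same computation with $T$, $\lambda$ replaced by $T^*$, $\overline{\lambda}$ shows $C$ carries $\mathrm{Ker}(T^*-\overline{\lambda})$ into $\mathrm{Ker}(T-\lambda)$. Because $C^2=I$, these two conjugate-linear maps are mutual inverses, so $C$ restricts to a conjugate-linear bijection of $\mathrm{Ker}(T-\lambda)$ onto $\mathrm{Ker}(T^*-\overline{\lambda})$; this is the ``moreover'' assertion. In particular one of these subspaces is nontrivial exactly when the other is, i.e. $\lambda$ is an eigenvalue of $T$ if and only if $\overline{\lambda}$ is an eigenvalue of $T^*$, which is the first assertion.
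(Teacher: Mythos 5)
Your proposal is correct and is essentially the paper's own argument written out in detail: the paper simply notes that $T=CT^*C$ implies $T-\lambda=C(T^*-\overline{\lambda})C$, which is exactly the identity your element-wise computation establishes. The extra care you take (checking $Cx\ne 0$ via the isometry property, and using $C^2=I$ to get a bijection) is a sound elaboration of the same one-line idea.
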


\begin{proof}
One only need to note that $T=CT^*C$ implies  $T-\lambda=C(T^*-\overline{\lambda})C$.
\\
\qedhere
\end{proof}

The next lemma follows directly from the proof of Lemma 2.2 in \cite{BN}.

\begin{lemma}\label{*s}
On $H^2(D)$ we have $C_{\varphi_a}^*1=K_a$ and $C_{\varphi_a}^*z^{k+1}=e_{k+1}-ae_k$. Here $e_k=K_a\varphi_a^k$ for each positive integer $k$.
\end{lemma}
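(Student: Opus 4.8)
The plan is to derive both identities from two standard facts: the reproducing-kernel adjoint formula $C_\varphi^*K_w=K_{\varphi(w)}$, and the $H^2$-norm-convergent expansion $K_w=\sum_{n\ge0}\overline{w}^n z^n$. Together these let me compute $C_{\varphi_a}^*z^n$ for all $n$ at once by treating $\sum_n \overline{w}^n z^n$ as a generating function in the parameter $\overline{w}$, applying $C_{\varphi_a}^*$ term by term, and matching coefficients against an explicit expansion of $K_{\varphi_a(w)}$.

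First I would record the adjoint formula itself: for any analytic self-map $\varphi$ of $D$, $\langle f,C_\varphi^*K_w\rangle=\langle f\comp\varphi,K_w\rangle=f(\varphi(w))=\langle f,K_{\varphi(w)}\rangle$, so $C_\varphi^*K_w=K_{\varphi(w)}$. Since $K_0\equiv1$ and $\varphi_a(0)=a$, this immediately gives $C_{\varphi_a}^*1=C_{\varphi_a}^*K_0=K_{\varphi_a(0)}=K_a$, which is the first assertion.

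For the monomials, I would apply the bounded operator $C_{\varphi_a}^*$ term by term to $K_w=\sum_{n\ge0}\overline{w}^n z^n$ to obtain
$$K_{\varphi_a(w)}=C_{\varphi_a}^*K_w=\sum_{n\ge0}\overline{w}^n\,C_{\varphi_a}^*z^n .$$
On the other hand, a direct computation with $\overline{\varphi_a(w)}=(\overline{a}-\overline{w})/(1-a\overline{w})$ rewrites the kernel, for $w\in D$, as
$$K_{\varphi_a(w)}(z)=\frac{1-a\overline{w}}{(1-\overline{a} z)\bigl(1-\overline{w}\,\varphi_a(z)\bigr)}=(1-a\overline{w})\,K_a(z)\sum_{n\ge0}\overline{w}^n\varphi_a(z)^n .$$
Distributing the factor $1-a\overline{w}$ and reindexing, the coefficient of $\overline{w}^n$ (for $n\ge1$) on the right is $K_a\varphi_a^n-aK_a\varphi_a^{\,n-1}=e_n-ae_{n-1}$, where I set $e_0:=K_a$ (consistent with $e_k=K_a\varphi_a^k$). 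Comparing the two displays coefficient by coefficient — legitimate since an $H^2$-valued power series has unique coefficients, or, concretely, by pairing both series with each $z^m$ and using uniqueness of scalar power series — yields $C_{\varphi_a}^*z^n=e_n-ae_{n-1}$ for all $n\ge1$; taking $n=k+1$ gives $C_{\varphi_a}^*z^{k+1}=e_{k+1}-ae_k$, and $n=0$ recovers the first identity.

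The only thing requiring care — and this is a routine lemma, so there is no real obstacle — is the analytic bookkeeping behind the term-by-term manipulations: that $\sum_n\overline{w}^n z^n$ converges to $K_w$ in $H^2$ (clear from $\sum_n|w|^{2n}<\infty$), that $C_{\varphi_a}^*$ passes through the sum (it is bounded), that $(1-a\overline{w})K_a\sum_n\overline{w}^n\varphi_a^n$ converges in $H^2$ for each fixed $w\in D$ (clear since $|\varphi_a|\le1$ on $\overline{D}$ and $K_a$ is a bounded multiplier with $\|K_a\|_\infty\le(1-|a|)^{-1}$), and the uniqueness of power-series coefficients used at the last step. This is precisely the content packaged in ``follows directly from the proof of Lemma 2.2 in \cite{BN}.''
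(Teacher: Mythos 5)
Your proof is correct, and it is the standard route: the paper itself gives no argument here, simply deferring to the proof of Lemma 2.2 in \cite{BN}, which rests on the same two ingredients you use, namely $C_\varphi^*K_w=K_{\varphi(w)}$ and expansion of $K_{\varphi_a(w)}$ as a power series in $\overline{w}$ with coefficients $e_n-ae_{n-1}$. Your generating-function bookkeeping (norm convergence of both series and coefficient matching against $z^m$) checks out, so this is a complete, self-contained verification of what the paper leaves to the citation.
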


As a corollary, we can get the eigenvectors of $C_\varphi$ on $H^2(D)$ when $\varphi$ is an elliptic automorphism of order $3$. 

\begin{corollary}\label{le}
Suppose $\varphi$ is an elliptic automorphism of order $3$ with fixed point $a\in D$. Let $\Lambda_m=\mathrm{Ker}(C_\varphi-\varphi'(a)^m)$ and $\Lambda^*_m=\mathrm{Ker}(C_\varphi^*-\overline{\varphi'(a)}^m)$ for $m=0,1,2$, then
$$\Lambda_m=\overline{ \mathrm{span}}\{\varphi_a^{3j+m}; j=0,1,2,...\}$$ and $$\Lambda_m^*=\overline{\mathrm{span}}\{e_{3j+m}-ae_{3j+m-1}; j=0,1,2,...\},$$
where $e_{-1}=0$ and $e_k=K_a\varphi_a^k$ for $k=0,1 ,2,...$
\end{corollary}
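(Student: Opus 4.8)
The plan is to conjugate $C_\varphi$ to a rotation operator, which is simultaneously diagonalized (together with its adjoint) by the monomial basis, and then to transport the resulting eigenspaces back through the conjugating operator. First I would record the normal form of $\varphi$: since $\varphi$ is an elliptic automorphism with fixed point $a$, the automorphism $\varphi_a\circ\varphi\circ\varphi_a$ fixes $0$, hence is a rotation $R_\lambda(z)=\lambda z$, so that $\varphi=\varphi_a\circ R_\lambda\circ\varphi_a$ (using $\varphi_a\circ\varphi_a=\mathrm{id}$). Differentiating at $a$ and using $\varphi_a'(0)\varphi_a'(a)=1$ gives $\lambda=\varphi'(a)$, and because $\varphi$ has order $3$ we have $\lambda^3=1$, $\lambda\ne 1$, so $1,\lambda,\lambda^2$ are distinct. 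Passing to composition operators, $C_\varphi=C_{\varphi_a}C_{R_\lambda}C_{\varphi_a}$, where $C_{\varphi_a}$ is bounded and invertible with $C_{\varphi_a}^{-1}=C_{\varphi_a}$; hence $C_\varphi-\varphi'(a)^m=C_{\varphi_a}\bigl(C_{R_\lambda}-\lambda^m\bigr)C_{\varphi_a}$, and taking adjoints, $C_\varphi^*-\overline{\varphi'(a)}^m=C_{\varphi_a}^*\bigl(C_{R_\lambda}^*-\overline{\lambda}^m\bigr)C_{\varphi_a}^*$, with $C_{\varphi_a}^*$ again a bounded invertible involution. It follows that $\Lambda_m=C_{\varphi_a}\,\mathrm{Ker}(C_{R_\lambda}-\lambda^m)$ and $\Lambda_m^*=C_{\varphi_a}^*\,\mathrm{Ker}(C_{R_\lambda}^*-\overline{\lambda}^m)$.

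Next I would compute the rotation eigenspaces on the orthonormal basis $\{z^k\}_{k\ge 0}$. Since $C_{R_\lambda}z^k=\lambda^k z^k$ and $\lambda$ has order $3$, we get $\lambda^k=\lambda^m$ precisely when $k\equiv m\pmod 3$, so $\mathrm{Ker}(C_{R_\lambda}-\lambda^m)=\overline{\mathrm{span}}\{z^{3j+m}:j\ge 0\}$. Moreover $R_\lambda$ is a rotation, so $C_{R_\lambda}$ is unitary with $C_{R_\lambda}^*=C_{R_{\overline{\lambda}}}$; as $\overline{\lambda}$ also has order $3$, the same reasoning yields $\mathrm{Ker}(C_{R_\lambda}^*-\overline{\lambda}^m)=\overline{\mathrm{span}}\{z^{3j+m}:j\ge 0\}$ as well. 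Because a bounded invertible operator carries the closed linear span of a set onto the closed linear span of the images, applying $C_{\varphi_a}$ and using $C_{\varphi_a}z^k=\varphi_a^k$ gives $\Lambda_m=\overline{\mathrm{span}}\{\varphi_a^{3j+m}:j\ge 0\}$, which is the first assertion.

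For the adjoint side I would invoke Lemma \ref{*s}, which gives $C_{\varphi_a}^*1=K_a$ and $C_{\varphi_a}^*z^{k+1}=e_{k+1}-ae_k$; under the conventions $e_0=K_a$ and $e_{-1}=0$ this unifies to $C_{\varphi_a}^*z^k=e_k-ae_{k-1}$ for every $k\ge 0$. Applying $C_{\varphi_a}^*$ to $\mathrm{Ker}(C_{R_\lambda}^*-\overline{\lambda}^m)=\overline{\mathrm{span}}\{z^{3j+m}\}$ then yields $\Lambda_m^*=\overline{\mathrm{span}}\{e_{3j+m}-ae_{3j+m-1}:j\ge 0\}$, the second assertion.

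The argument is essentially bookkeeping once the conjugacy $\varphi=\varphi_a\circ R_\lambda\circ\varphi_a$ is in hand, so I do not anticipate a serious obstacle; the points needing attention are (i) identifying the rotation multiplier with $\varphi'(a)$ via the chain rule, (ii) the degenerate index $m=j=0$, where one must check that $C_{\varphi_a}^*1=K_a$ is consistent with the formula $e_0-ae_{-1}$ under the stated conventions, and (iii) the fact that $C_{\varphi_a}$, though not an isometry, is invertible with $C_{\varphi_a}^{-1}=C_{\varphi_a}$, which is exactly what permits transporting kernels and closed spans between $C_{R_\lambda}$ and $C_\varphi$.
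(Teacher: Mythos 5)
Your proposal is correct and follows essentially the same route as the paper: conjugating $\varphi$ to the rotation $\tau=\varphi_a\circ\varphi\circ\varphi_a$, diagonalizing $C_\tau$ and $C_\tau^*$ on the monomial basis, and transporting the eigenspaces through the invertible operators $C_{\varphi_a}$ and $C_{\varphi_a}^*$ using Lemma \ref{*s}. The only (cosmetic) difference is that you obtain the kernel identifications directly from the similarity $C_\varphi-\varphi'(a)^m=C_{\varphi_a}(C_{R_\lambda}-\lambda^m)C_{\varphi_a}$, whereas the paper first proves inclusions of closed spans and then upgrades them to equalities via density.
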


\begin{proof}
Let $\tau=\varphi_a\comp\varphi\comp\varphi_a$. Then $\tau$ is a rotation of order $3$, that is, $\tau(z)=\varphi'(a)z$. So $C_\tau z^k=\varphi'(a)^kz^k$ for $k=0,1,2,...$ Moreover, since $C_\tau^*=C_{\tau^{-1}}$ where $\tau^{-1}(z)=\overline{\varphi'(a)}z$, we also have $C_\tau^* z^k=\overline{\varphi'(a)}^kz^k$. Thus
$$z^k\in \mathrm{Ker}(C_\tau-\varphi'(a)^k)\cap \mathrm{Ker}(C_\tau^*-\overline{\varphi'(a)}^k)$$
for $k=0,1,2,...$

Now by the definition of $\tau$ we have $C_\varphi C_{\varphi_a}=C_\tau C_{\varphi_a}$ and $C_\varphi^* C_{\varphi_a}^*=C_\tau^* C_{\varphi_a}^*$, so
$$C_{\varphi_a}z^k\in \mathrm{Ker}(C_\varphi-\varphi'(a)^k)$$ and $$C_{\varphi_a}^*z^k\in \mathrm{Ker}(C_\varphi^*-\overline{\varphi'(a)}^k).$$ Since $\varphi'(a)^3=1$ one can get
$$\overline{\mathrm{span}}\{C_{\varphi_a}z^{3j+m}; j=0,1,2,...\}\subset\Lambda_m$$
and
$$\overline{\mathrm{span}}\{C_{\varphi_a}^*z^{3j+m}; j=0,1,2,...\}\subset\Lambda_m^*.$$
On the other hand, both $C_{\varphi_a}$ and $C_{\varphi_a}^*$ are invertible on $H^2(D)$, then
$$\overline{\mathrm{span}}\{C_{\varphi_a}z^k; k=0,1,2,...\}=\overline{\mathrm{span}}\{C_{\varphi_a}^*z^k; k=0,1,2,...\}=H^2(D).$$
so we have
$$\Lambda_m=\overline{\mathrm{span}}\{C_{\varphi_a}z^{3j+m}; j=0,1,2,...\}$$
and
$$\Lambda_m^*=\overline{\mathrm{span}}\{C_{\varphi_a}^*z^{3j+m}; j=0,1,2,...\}.$$

Finally, $C_{\varphi_a}z^k=\varphi_a^k$ and Lemma \ref{*s} shows that $C_{\varphi_a}z^k=e_k-ae_{k-1}$, hence the proof is done.
\\
\qedhere
\end{proof}

\begin{remark}
Note that $||e_j||^2=(1-|a|^2)^{-1}$ for $j=0,1,2,...$ and $\langle e_j,e_k\rangle=0$ whenever $j\ne k$.
\end{remark}


In what follows we list some equations that will be used in our proofs. The first one is a well known identity which can be found everywhere. One can refer to \cite{CC}, for example.  Suppose $\varphi$ is an automorphism of $D$, then
\begin{align}\label{id}
1-|\varphi(z)|^2=\frac{(1-|w|^2)(1-|z|^2)}{|1-\overline{w}z|^2}
\end{align}
for all $z\in D$. Here $w=\varphi^{-1}(0)$.

Suppose $\varphi$ is a analytic self-map of $D$ with a fixed point $a\in D$, then on $H^2(D)$ we have the following formulae,

\begin{align}\label{*1}
C_\varphi^*K_a=K_a;
\end{align}

\begin{align}\label{*2}
C_\varphi^*K_a^{[1]}=\overline{\varphi'(a)}K_a^{[1]};
\end{align}
and
\begin{align}\label{*3}
C_\varphi^*K_a^{[2]}=\overline{\varphi'(a)}^2K_a^{[2]}+\overline{\varphi''(a)}K_a^{[1]}.
\end{align}

We omit the simple calculation here.

\section{Automorphisms}

In this section, we will focus on the proof of our first main result, i.e., Theorem \ref{main}, which assert that no elliptic automorphism of order $3$ except for rotations can induce a complex symmetric composition operator on $H^2(D)$.

We would like to point out here that throughout this section, each notation will always represent the same thing as it
did initially. For example, $\varphi$ is always a elliptic automorphism of order $3$ in what follows, $a$ is always the fixed point of $\varphi$ in $D\backslash\{0\}$, and $\rho$ always represents the same constant $-\frac{\overline{a}^2}{a}\cdot\frac{1-|a|^2}{1-|a|^4}$ ever since it is introduced in Claim \ref{1}.

We will assume that $C_\varphi$ is $C$-symmetric with respect to some conjugation $C$, and finally we will show this assumption leads to a contradiction. Now we start by determining the image of a certain vector under the conjugation $C$. The notations in Lemma \ref{*s} and Corollary \ref{le} are still valid in this section.

\begin{claim}\label{1}
Let $\varphi$ be an elliptic automorphism of order $3$ with fixed point $a\in D\backslash\{0\}$. If $C_\varphi$ if $C$-symmetric on $H^2(D)$ with respect to a conjugation $C$, then we have
$$Ce_0=c_0\frac{1-\overline{a}^3\varphi_a^3}{1-\rho\varphi_a^3},$$
where $c_0$ is a constant and $$\rho=-\frac{\overline{a}^2}{a}\cdot\frac{1-|a|^2}{1-|a|^4}.$$
\end{claim}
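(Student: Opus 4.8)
I would pin down $Ce_0$ in two stages: first show it must lie in the eigenspace $\Lambda_0$ and introduce coordinates, then extract those coordinates by a recursion that summing gives the closed form. Throughout, write $e_0=K_a$, $g_j:=e_{3j}-ae_{3j-1}$ for the generators of $\Lambda_0^*$ from Corollary \ref{le} (so $g_0=e_0$), and recall that since $C_\varphi$ is $C$-symmetric we have $C_\varphi=CC_\varphi^*C$, hence $CC_\varphi^*=C_\varphi C$.

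\textbf{Step 1: locating $Ce_0$.} By equation (\ref{*1}), $C_\varphi^*e_0=C_\varphi^*K_a=K_a=e_0$, so $e_0\in\Lambda_0^*=\operatorname{Ker}(C_\varphi^*-I)$. Lemma \ref{zh} (with $\lambda=1$) then forces $Ce_0\in\operatorname{Ker}(C_\varphi-I)=\Lambda_0$, so by Corollary \ref{le} we may write $Ce_0=\sum_{j\ge 0}b_j\varphi_a^{3j}$ with convergence in $H^2$. Next I would record the biorthogonality identity $\langle\varphi_a^{3k},g_j\rangle=\delta_{kj}$ for all $j,k\ge 0$; this is an elementary calculation using the reproducing property $\langle F,K_a\rangle=F(a)$, the fact that $\varphi_a(a)=0$, that $\varphi_a$ is inner (so $\langle F\varphi_a^m,\varphi_a^n\rangle=\langle F\varphi_a^{m-n},1\rangle=(F\varphi_a^{m-n})(0)$ for $m\ge n$), and the relations $\langle e_m,e_n\rangle=\delta_{mn}(1-|a|^2)^{-1}$ from the Remark. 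Consequently $b_j=\langle Ce_0,g_j\rangle$, and since $C$ carries the orthogonal basis $\{g_j\}$ of $\Lambda_0^*$ to an orthogonal system of the same norms in $\Lambda_0$, one already gets $\|Ce_0\|^2=(1-|a|^2)^{-1}$ and $\langle Ce_0,Cg_j\rangle=\langle g_j,g_0\rangle=0$ for $j\ge1$.

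\textbf{Step 2: structural tools.} The engine of the proof should be multiplication operators. Because $\varphi_a\circ\varphi=\varphi'(a)\varphi_a$ and $\varphi'(a)^3=1$, we get $\varphi_a^3\circ\varphi=\varphi_a^3$, so $M_{\varphi_a^3}$ commutes with $C_\varphi$ (and $M_{\varphi_a^3}^*$ with $C_\varphi^*$), while $M_{\varphi_a}$ intertwines the three eigenspaces via $C_\varphi M_{\varphi_a}=\varphi'(a)M_{\varphi_a}C_\varphi$. Direct computations give $g_{j+1}=\varphi_a^3 g_j$ for $j\ge1$, $M_{\varphi_a^3}^*g_j=g_{j-1}$ for $j\ge1$ with $M_{\varphi_a^3}^*g_0=0$, and, on $\Lambda_0$ itself, $M_{\varphi_a^3}^*\varphi_a^{3j}=\varphi_a^{3(j-1)}$ for $j\ge1$ together with $M_{\varphi_a^3}^*1=\overline a^3\cdot 1$ (this last eigenvalue is where $\overline a^3$ will enter). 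I would also bring in the derivative-evaluation vectors: by (\ref{*2})–(\ref{*3}), $K_a^{[1]}\in\Lambda_1^*$ and $K_a^{[2]}\in\Lambda_1^*+\Lambda_2^*$, and in fact $K_a^{[1]}=-\tfrac1{1-|a|^2}(e_1-ae_0)$ while $K_a,K_a^{[1]},K_a^{[2]}$ span the model space $H^2\ominus\varphi_a^3H^2$; this lets one transport information between $\Lambda_0$ and the other two eigenspaces, which carry biorthogonal pairs analogous to $\{\varphi_a^{3k}\},\{g_j\}$.

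\textbf{Step 3: the recursion, and the main obstacle.} Combining Step 2 with the isometry axiom $\langle Cx,Cy\rangle=\langle y,x\rangle$ — pairing $e_0$ against $K_a^{[1]}$, $K_a^{[2]}$ and the successive generators $g_j$, and using $CC_\varphi^*=C_\varphi C$ to track how $M_{\varphi_a}$ moves vectors between $\Lambda_0,\Lambda_1,\Lambda_2$ — should yield the two-term recursion
$$b_{j+1}=\rho\,b_j\ \ (j\ge1),\qquad b_1=(\rho-\overline a^3)\,b_0,$$
in which the constant is forced to be $\rho=-\dfrac{\overline a^2}{a}\cdot\dfrac{1-|a|^2}{1-|a|^4}$. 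Summing the geometric tail,
$$Ce_0=b_0\Bigl(1+(\rho-\overline a^3)\sum_{j\ge1}\rho^{j-1}\varphi_a^{3j}\Bigr)=b_0\cdot\frac{1-\overline a^3\varphi_a^3}{1-\rho\varphi_a^3},$$
which is the asserted formula with $c_0=b_0$ (nonzero since $C$ is injective). The hard part is precisely the derivation of this recursion: the facts in Steps 1–2 that only use $M_{\varphi_a^3}$ and biorthogonality turn out to be tautological, so the isometry condition on $C$ must be used in an essential, quantitative way, and computing the exact value of $\rho$ (rather than merely showing the tail is geometric) is where the real work lies — this is the step I expect to be the main obstacle, and it is where the explicit inner products among the $e_j$ and among $K_a,K_a^{[1]},K_a^{[2]}$ have to be invoked.
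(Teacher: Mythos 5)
Your Step~1 coincides with the paper's opening move: Lemma \ref{zh} places $h_0:=Ce_0$ in $\Lambda_0$, so one expands $h_0=\sum_{j\ge0}c_j\varphi_a^{3j}$. But the proposal then stalls exactly where you say it does — you never derive the recursion, and the tools assembled in Step~2 will not produce it. The missing idea is a \emph{cross-eigenspace} orthogonality transfer. From the bases in Corollary \ref{le} and the mutual orthogonality of the $e_k$, the vector $e_0=K_a$ is orthogonal to all of $\Lambda_2^*=\overline{\mathrm{span}}\{e_{3j+2}-ae_{3j+1}\}$ (every generator involves only $e_m$ with $m\ge1$). Since $C$ is an antilinear isometry and, by Lemma \ref{zh}, carries $\Lambda_2^*$ onto $\Lambda_2$, it follows that $h_0\perp\Lambda_2=\overline{\mathrm{span}}\{\varphi_a^{3k+2}\}$. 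This is a genuine constraint precisely because the eigenspaces $\Lambda_0$ and $\Lambda_2$ of $C_\varphi$ are \emph{not} orthogonal to one another: $\langle\varphi_a^{3j},\varphi_a^{3k+2}\rangle$ equals $\overline{a}^{\,3k+2-3j}$ or $a^{3j-3k-2}$, never zero. Writing out $\langle h_0,\varphi_a^{3k+2}\rangle=0$ for every $k\ge0$ yields an infinite linear system in the $c_j$; subtracting consecutive equations (after multiplying by $a^3$) collapses the tails and gives $c_1=\tilde\rho c_0$ with $\tilde\rho=-\frac{\overline{a}^2}{a}\cdot\frac{1-|a|^6}{1-|a|^4}=\rho-\overline{a}^3$ and $c_{k+1}=\rho c_k$ for $k\ge1$ — exactly the recursion you guessed but could not establish. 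Summing then gives the stated closed form.

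By contrast, the relations you actually prove — the biorthogonality $\langle\varphi_a^{3k},g_j\rangle=\delta_{kj}$, the action of $M_{\varphi_a^3}$ and $M_{\varphi_a^3}^*$, and the identities $\langle Ce_0,Cg_j\rangle=\langle g_j,g_0\rangle=0$ — all live inside the single pair $(\Lambda_0,\Lambda_0^*)$ and, as you concede, carry no new information there; and your alternative of pairing against $K_a^{[1]},K_a^{[2]}$ leads into $\Lambda_1^*$, where the argument cannot start cleanly because $e_0$ is \emph{not} orthogonal to $\Lambda_1^*$ (indeed $\langle e_0,e_1-ae_0\rangle=-\overline{a}\,\|e_0\|^2\ne0$), whereas it is orthogonal to all of $\Lambda_2^*$. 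Since the derivation of the recursion is the entire content of the claim beyond Step~1 and is explicitly left as an unproved ``main obstacle,'' the proposal has a genuine gap.
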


\begin{proof}
Let $h_0=Ce_0$. Since $e_0\in\Lambda_0^*$, by Lemma \ref{zh} we have $h_0\in\Lambda_0$. So we can suppose that
$$h_0=\sum_{j=0}^\infty c_j\varphi_a^{3j}.$$

It is obvious that $e_0$ is orthogonal to $\Lambda_2^*$. So by Lemma \ref{zh} and the fact that $C$ is an isometry, $h_0$ is orthogonal to $\Lambda_2$, which means that $\langle h_0,\varphi_a^{3k+2}\rangle=0$ for $k=0,1,2,...$ So we have the following equations,
\begin{align}\label{3.1}
\sum_{j=0}^{k}c_j\overline{a}^{3k+2-3j}+\sum_{j=k+1}^{\infty}c_ja^{3j-3k-2}=0
\end{align}
for $k=0,1,2...$ Replacing $k$ by $k+1$ in (\ref{3.1}) we get
$$\sum_{j=0}^{k+1}c_j\overline{a}^{3k+5-3j}+\sum_{j=k+2}^{\infty}c_ja^{3j-3k-5}=0,$$
hence
\begin{align}\label{3.2}
\sum_{j=0}^{k+1}c_j\overline{a}^{3k+5-3j}a^3+\sum_{j=k+2}^{\infty}c_ja^{3j-3k-2}=0.
\end{align}

Combining (\ref{3.1}) and (\ref{3.2}), we have
\begin{align*}
\sum_{j=0}^kc_j\overline{a}^{3k+2-3j}+c_{k+1}a&=\sum_{j=0}^{k+1}c_j\overline{a}^{3k+5-3j}a^3,
\end{align*}
or we can write
\begin{align}\label{3.3}
\sum_{j=0}^kc_j\overline{a}^{3k+2-3j}+c_{k+1}a\frac{1-|a|^4}{1-|a|^6}=0,
\end{align}
for $k=0,1,2...$
By taking $k=1$ in (\ref{3.3}) we get
$$c_1=\tilde\rho c_0$$
where $\tilde\rho=-\frac{\overline{a}^2}{a}\cdot\frac{1-|a|^6}{1-|a|^4}$. And replacing $k$ by $k-1$ in (\ref{3.3}) we have
\begin{align}\label{3.8}
\sum_{j=0}^{k-1}c_j\overline{a}^{3k-1-3j}+c_ka\frac{1-|a|^4}{1-|a|^6}=0,
\end{align}
then (\ref{3.3}) and (\ref{3.3}) gives that
$$c_{k+1}=\rho c_k$$
for $j=1,2,3...$, where $\rho=-\frac{\overline{a}^2}{a}\cdot\frac{1-|a|^2}{1-|a|^4}$.
Therefore,
\begin{align*}
h_0&=c_0+c_1\sum_{j=1}^\infty\rho^{j-1}\varphi_a^{3j}
\\&=c_0+c_0\frac{\tilde\rho\varphi_a^3}{1-\rho\varphi_a^3}
\\&=c_0\frac{1-\overline{a}^3\varphi_a^3}{1-\rho\varphi_a^3}.
\end{align*}
\end{proof}

\begin{claim}
For the constant $c_0$ in Claim \ref{1}, we have
$$|c_0|=\frac{1}{1-|a|^4}.$$
\end{claim}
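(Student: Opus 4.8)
The plan is to exploit the fact that a conjugation is an isometry. Since $C$ is a conjugation and $e_0=K_a$, we have $\|Ce_0\|=\|e_0\|=\|K_a\|$, and $\|K_a\|^2=K_a(a)=(1-|a|^2)^{-1}$ by the remark following Corollary \ref{le}. So it suffices to compute the norm of the right-hand side of the formula in Claim \ref{1} and then solve for $|c_0|$.

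First I would realise $Ce_0$ as a composition transform. Since $\varphi_a^k=C_{\varphi_a}(z^k)$ by the very definition of the composition operator, the formula in Claim \ref{1} reads $Ce_0=C_{\varphi_a}F$, where
\[
F(z)=c_0\,\frac{1-\overline{a}^3 z^3}{1-\rho z^3}.
\]
Because $|\rho|=|a|/(1+|a|^2)<1$, the denominator is bounded away from $0$ on $\overline{D}$, so $F\in H^\infty(D)$. Now I would apply the standard change-of-variables formula for composition with an automorphism: recalling that $\varphi_a$ is an involution, so $\varphi_a^{-1}=\varphi_a$, for $f\in H^\infty(D)$ one has
\[
\|C_{\varphi_a}f\|^2=\int_{\partial D}|f(\zeta)|^2\,|\varphi_a'(\zeta)|\,\frac{d\theta}{2\pi}.
\]
Since $|\varphi_a'(\zeta)|=(1-|a|^2)\,|1-\overline{a}\zeta|^{-2}=(1-|a|^2)\,|K_a(\zeta)|^2$ on $\partial D$ and $FK_a\in H^\infty(D)$ (so its $H^2$ norm is computed by the boundary integral), this yields $\|Ce_0\|^2=\|C_{\varphi_a}F\|^2=(1-|a|^2)\,\|FK_a\|^2$.

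The heart of the matter is evaluating $\|FK_a\|^2$, and this is where the special form of $F$ pays off: since $1-\overline{a}^3 z^3=(1-\overline{a}z)(1+\overline{a}z+\overline{a}^2 z^2)$, the kernel $K_a=(1-\overline{a}z)^{-1}$ cancels and
\[
F(z)K_a(z)=c_0\,\frac{1+\overline{a}z+\overline{a}^2 z^2}{1-\rho z^3}=c_0\sum_{j=0}^\infty \rho^j\bigl(z^{3j}+\overline{a}z^{3j+1}+\overline{a}^2 z^{3j+2}\bigr).
\]
The monomials occurring here are pairwise distinct, so
\[
\|FK_a\|^2=|c_0|^2\,(1+|a|^2+|a|^4)\sum_{j=0}^\infty|\rho|^{2j}=|c_0|^2\,\frac{1+|a|^2+|a|^4}{1-|\rho|^2}.
\]
A direct computation gives $1-|\rho|^2=(1+|a|^2+|a|^4)/(1+|a|^2)^2$, hence $\|FK_a\|^2=|c_0|^2(1+|a|^2)^2$. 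Combining the pieces, $\|Ce_0\|^2=|c_0|^2(1-|a|^2)(1+|a|^2)^2=|c_0|^2(1+|a|^2)(1-|a|^4)$. Equating this with $\|e_0\|^2=(1-|a|^2)^{-1}$ and using $(1+|a|^2)(1-|a|^2)=1-|a|^4$ yields $|c_0|^2(1-|a|^4)^2=1$, that is, $|c_0|=(1-|a|^4)^{-1}$.

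The only points requiring a little care are the justification of the norm identity $\|C_{\varphi_a}f\|^2=(1-|a|^2)\|fK_a\|^2$ — routine once one observes that $|a|<1$ forces $fK_a\in H^\infty\subset H^2$ — and the algebraic identity for $1-|\rho|^2$, which is exactly what makes the final expression collapse. Neither is a genuine obstacle; once $Ce_0$ is written as $C_{\varphi_a}F$ and the factorisation of $1-\overline{a}^3z^3$ is used, the computation is essentially forced.
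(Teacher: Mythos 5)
Your proof is correct, and it follows the same overall strategy as the paper --- use the isometry of the conjugation $C$ to equate $\|Ce_0\|$ with $\|e_0\|=\|K_a\|=(1-|a|^2)^{-1/2}$, then compute the norm of the explicit expression from Claim \ref{1} --- but the norm computation itself is carried out by a genuinely different technique. The paper writes $h_0=\gamma_1 g+\gamma_2$ where $g=(\overline{\rho}-\varphi_a^3)/(1-\rho\varphi_a^3)$ is inner, and expands the inner product using $\|g\|=1$ and $\langle g,1\rangle=g(0)$; you instead undo the composition with $\varphi_a$ via the change-of-variables identity $\|C_{\varphi_a}f\|^2=(1-|a|^2)\|fK_a\|^2$ (valid here since $F, K_a\in H^\infty$), after which the factorisation $1-\overline{a}^3z^3=(1-\overline{a}z)(1+\overline{a}z+\overline{a}^2z^2)$ cancels the kernel and reduces everything to a power series with pairwise distinct monomials. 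Both routes land on $\|h_0\|^2=|c_0|^2(1-|a|^2)(1+|a|^2)^2$ and hence $|c_0|=(1-|a|^4)^{-1}$. Your approach buys a computation in which orthogonality is completely transparent (no inner-function bookkeeping, no cross terms to evaluate), at the cost of invoking the boundary change-of-variables formula; the paper's approach stays entirely inside $H^2$ with reproducing-kernel identities but requires one to guess the decomposition of $h_0$ in terms of the inner function $g$. Your intermediate checks ($|\rho|=|a|/(1+|a|^2)<1$ and $1-|\rho|^2=(1+|a|^2+|a|^4)/(1+|a|^2)^2$) are both correct.
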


\begin{proof}
Let
$$g=\frac{\overline{\rho}-\varphi_a^3}{1-\rho\varphi_a^3},$$
then $g$ is an inner function and $g(0)=-\frac{a^2}{\overline{a}}$.

An easy calculation shows that $h_0=\gamma_1g+\gamma_2$, where
\begin{align*}
\gamma_1&=c_0\frac{\overline{a}^2}{a}(1+|a|^2);\\
\gamma_2&=c_0(1+|a|^2).
\end{align*}
So we have
\begin{align*}
||h_0||^2&=\langle\gamma_1g+\gamma_2,\gamma_1g+\gamma_2\rangle
\\&=|\gamma_1|^2+|\gamma_2|^2+2\Re\{\gamma_1\overline{\gamma_2}g(0)\}
\\&=|c_0|^2(1-|a|^2)(1+|a|^2)^2.
\end{align*}

On the other hand, since $C$ is isometric, we can know that
$$||h_0||^2=||e_0||^2=\frac{1}{1-|a|^2},$$
thus $|c_0|=(1-|a|^4)^{-1}$.
\\
\qedhere
\end{proof}

\begin{claim}\label{3}
For the function $h_0=Ce_0$ in the proof of Claim \ref{1}, we have
$$\langle h_0,\varphi^{3k}\rangle=c_0(1-|a|^4)\rho^k$$\
for $k=0,1,2...$
\end{claim}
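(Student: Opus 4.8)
The plan is to compute the inner products $\langle h_0,\varphi^{3k}\rangle$ directly from the explicit formula for $h_0$ obtained in Claim \ref{1}, namely $h_0 = c_0\frac{1-\overline{a}^3\varphi_a^3}{1-\rho\varphi_a^3}$. First I would recall from the proof of Claim \ref{1} that $h_0 = c_0 + c_1\sum_{j=1}^\infty \rho^{j-1}\varphi_a^{3j}$ with $c_1 = \tilde\rho c_0$, so that expanding as a power series in $\varphi_a^3$ gives $h_0 = \sum_{j=0}^\infty c_j \varphi_a^{3j}$ with $c_0$ fixed and $c_j = \tilde\rho\,\rho^{\,j-1} c_0$ for $j\geq 1$. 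Then the task reduces to evaluating $\langle \varphi_a^{3j},\varphi^{3k}\rangle$ for each pair $j,k$.

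The key computational step is to express the pairing $\langle \varphi_a^m,\varphi^n\rangle$ in closed form. Since $\varphi$ is an elliptic automorphism of order $3$ fixing $a$, recall $\tau=\varphi_a\comp\varphi\comp\varphi_a$ is the rotation $\tau(z)=\varphi'(a)z$; equivalently $\varphi = \varphi_a\comp\tau\comp\varphi_a$. I would use the change of variables induced by $\varphi_a$ (which is a self-inverse automorphism) together with the identity (\ref{id}): for $f,g\in H^2(D)$ one has $\langle f\comp\varphi_a, g\comp\varphi_a\rangle$ equal to an integral against the boundary measure, but more efficiently I would note that $C_{\varphi_a}$ composed with itself is the identity and that $C_{\varphi_a}^* = C_{\varphi_a}$ is \emph{not} true in general — instead I would use Lemma \ref{*s}, which gives $C_{\varphi_a}^* z^k = e_k - a e_{k-1}$, together with $C_{\varphi_a} z^k = \varphi_a^k$. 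Writing $\varphi^{3k} = C_\varphi z^{3k}$ and $\varphi_a^{3j} = C_{\varphi_a} z^{3j}$, and using $C_\varphi C_{\varphi_a} = C_\tau C_{\varphi_a}$ from the proof of Corollary \ref{le}, one gets $\varphi^{3k} = C_\varphi C_{\varphi_a} z^{3k} \cdot(\text{adjust})$; more directly, since $\varphi_a\comp\varphi = \tau\comp\varphi_a$ we have $\varphi^{3k}\comp\varphi_a = \tau^{3k}\cdot\varphi_a^{3k}$... I would instead pair against the known action on $\Lambda_0$: $h_0\in\Lambda_0$ and $\varphi^{3k}\in H^2(D)$, so expand $\varphi^{3k}$ in the orthogonal-type basis coming from $\varphi_a$. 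Concretely, the cleanest route is: $\langle h_0,\varphi^{3k}\rangle = \langle h_0, C_\varphi z^{3k}\rangle = \langle C_\varphi^* h_0, z^{3k}\rangle = \overline{\varphi'(a)}^{\,3k}\langle h_0, z^{3k}\rangle = \langle h_0, z^{3k}\rangle$ since $h_0\in\Lambda_0$ forces $C_\varphi h_0 = h_0$, hence $C_\varphi^* $ acts... wait — $h_0\in\Lambda_0 = \mathrm{Ker}(C_\varphi - 1)$ means $C_\varphi h_0 = h_0$, not a statement about $C_\varphi^*$. So instead I would write $\langle h_0,\varphi^{3k}\rangle = \langle h_0, \varphi^{3k}\rangle$ and use that $\varphi^{3k} = \varphi_a^{3k} + (\text{terms involving } \varphi'(a))$ via the binomial-type expansion of $\varphi = \varphi_a\comp\tau\comp\varphi_a$; but the honest approach is simply to substitute the geometric series for $h_0$ and compute $\sum_j c_j \langle\varphi_a^{3j},\varphi^{3k}\rangle$ using the explicit formula $\varphi_a^3 = \big(\frac{a-z}{1-\bar a z}\big)^3$ and $\varphi^{3k}$ written out, reducing each pairing to a residue/coefficient computation.

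The main obstacle I anticipate is organizing the double sum $\sum_{j=0}^\infty c_j\langle\varphi_a^{3j},\varphi^{3k}\rangle$ so that it telescopes or sums geometrically to $c_0(1-|a|^4)\rho^k$; this requires knowing $\langle\varphi_a^{3j},\varphi^{3k}\rangle$ exactly, which in turn depends on expanding $\varphi^{3k}$ in powers of $\varphi_a$ (note $\varphi$ and $\varphi_a$ generate a finite group, and $\varphi = \varphi'(a)$-rotation conjugated by $\varphi_a$). I would exploit that $\varphi_a\comp\varphi\comp\varphi_a = \tau$ implies $\varphi^{n} = \varphi_a\comp\tau^n\comp\varphi_a$, so $\langle\varphi_a^{3j},\varphi^{3k}\rangle = \langle (\mathrm{id})^{3j}\comp\varphi_a \cdot \ldots\rangle$ — cleaner: $\langle \varphi_a^{3j}, \varphi^{3k}\rangle = \langle C_{\varphi_a} z^{3j}, C_{\varphi_a} C_{\tau^{3k}} z^{?}\rangle$. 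Since $\tau^{3k}=\mathrm{id}$, in fact $\varphi^{3k}=\mathrm{id}$ as a \emph{map}? No: $\varphi$ has order $3$, so $\varphi^{3k}=\mathrm{id}$ and $\varphi^{3k}(z)=z$; that cannot be right because then $\langle h_0,\varphi^{3k}\rangle=\langle h_0,z\rangle$ is independent of $k$. So the intended meaning must be $\varphi^{3k}$ as the \emph{$3k$-th power function} $z\mapsto(\varphi(z))^{3k}$, i.e. $\varphi^{3k}\in H^2(D)$ is $(\varphi)^{3k}$, matching the earlier usage $\varphi_a^k = (\varphi_a)^k$. With that reading, I compute $\langle h_0,(\varphi)^{3k}\rangle$ by substituting $z=\varphi_a(w)$ in the defining integral and using (\ref{id}); the Jacobian/Poisson-kernel factor $(1-|a|^2)/|1-\bar a w|^2$ produces the factor $(1-|a|^4)$ after the dust settles, while the rotation $\tau$ contributes $\varphi'(a)^{3k}=1$, and the pole structure of $h_0$ (a single pole from $1/(1-\rho\varphi_a^3)$) yields the geometric factor $\rho^k$. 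The bookkeeping of these three contributions is the step requiring care; everything else is routine.
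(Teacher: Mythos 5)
There is a genuine gap, and it starts with the identification of the function being paired against. You correctly sensed that $\varphi^{3k}$ cannot mean the $3k$-th iterate (which is the identity), but the resolution you chose --- the $3k$-th power $(\varphi(z))^{3k}$ --- is also not what the claim means. The statement contains a typo: it should read $\varphi_a^{3k}$. This is forced both by the paper's own computation, which uses $\langle \varphi_a^{3j},\varphi_a^{3k}\rangle=\overline{a}^{3k-3j}$ for $j\leqslant k$ and $a^{3j-3k}$ for $j\geqslant k$ (these are the coefficients appearing in the displayed sum), and by the way the claim is invoked in Claim \ref{4}, where $h_1$ is expanded in powers of $\varphi_a$ and the orthogonality $\langle h_1,\varphi_a^{3k}\rangle=0$ is unpacked using exactly the quantity $\overline{a}\langle h_0,\varphi_a^{3k}\rangle=c_0\overline{a}(1-|a|^4)\rho^k$. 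With your reading the formula is simply false: already for $j=0$, $k=1$ one would need $\langle 1,\varphi^3\rangle=\overline{\varphi(0)}^3$ to equal $\overline{a}^3$, which fails since $\varphi(0)\ne a$ when $a\ne 0$. Pursuing $(\varphi)^{3k}$ leads you into the change-of-variables/Poisson-kernel computation, which is both the wrong target and, as written, never actually carried out: the assertions that the Jacobian ``produces the factor $(1-|a|^4)$'' and that the pole of $h_0$ ``yields the geometric factor $\rho^k$'' are exactly the claims that need proof, and nothing in the proposal substantiates them.

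With the correct reading the argument is short and does not need any boundary integral. The paper expands $\langle h_0,\varphi_a^{3k}\rangle=\sum_{j=0}^{k}c_j\overline{a}^{3k-3j}+\sum_{j=k+1}^{\infty}c_ja^{3j-3k}$, uses the orthogonality relation (\ref{3.1}) (i.e.\ $h_0\perp\Lambda_2$, already established in Claim \ref{1}) to replace the infinite tail by $-|a|^4\sum_{j=0}^{k}c_j\overline{a}^{3k-3j}$, and then uses the recurrence (\ref{3.3}) to identify $\sum_{j=0}^{k}c_j\overline{a}^{3k-3j}$ with $c_{k+1}/\tilde\rho=c_0\rho^k$. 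Your instinct to ``substitute the geometric series for $h_0$ and compute $\sum_j c_j\langle\varphi_a^{3j},\varphi_a^{3k}\rangle$ directly'' would in fact also work --- summing the two geometric pieces with $c_j=c_0\tilde\rho\rho^{j-1}$ and using $\rho-\overline{a}^3=\tilde\rho$ and $1-\rho a^3=(1-|a|^6)/(1-|a|^4)$ does give $c_0(1-|a|^4)\rho^k$ --- but that computation appears nowhere in the proposal, which instead cycles through several abandoned strategies. As it stands the proposal neither identifies the right statement nor proves any version of it.
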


\begin{proof}
\begin{align}\label{3.4}
\langle h_0,\varphi^{3k}\rangle=\sum_{j=0}^{k}c_j\overline{a}^{3k-3j}+\sum_{j=k+1}^{\infty}c_ja^{3j-3k}.
\end{align}
Comparing (\ref{3.4}) with (\ref{3.1}), we can get that
\begin{align}\label{3.5}
\langle h_0,\varphi^{3k}\rangle=(1-|a|^4)\sum_{j=0}^{k}c_j\overline{a}^{3k-3j}.
\end{align}
So (\ref{3.5}), along with (\ref{3.3}), shows that
\begin{align*}
\langle h_0,\varphi^{3k}\rangle&=(1-|a|^4)\frac{c_{k+1}}{\tilde\rho}
\\&=c_0(1-|a|^4)\rho^k.
\end{align*}
\end{proof}

\begin{claim}\label{4}
Under the assumption of Claim \ref{1}, we have
$$Ce_1=-c_0\frac{\overline{a}(1-|a|^6)}{a(1-|a|^4)}\cdot\frac{\varphi_a(1-\overline{a}^3\varphi_a^3)}{(1-\rho\varphi_a^3)^2}+\overline{a}h_0.$$
\end{claim}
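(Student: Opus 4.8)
The plan is to peel off the $C_\varphi^*$‑eigenvector part of $e_1$. By Corollary~\ref{le}, $e_1-ae_0=e_{3\cdot0+1}-ae_{3\cdot0}\in\Lambda_1^*$; and since $C_\varphi=CC_\varphi^*C$ forces $C_\varphi^*=CC_\varphi C$, the operator $C_\varphi^*$ is itself $C$‑symmetric, so Lemma~\ref{zh} applied to $C_\varphi^*$ yields $h_1:=C(e_1-ae_0)\in\Lambda_1$. As $C$ is conjugate‑linear,
\[
Ce_1=C(e_1-ae_0)+\overline{a}\,Ce_0=h_1+\overline{a}h_0,
\]
so the statement is equivalent to proving the closed form $h_1=-c_0\dfrac{\overline{a}(1-|a|^6)}{a(1-|a|^4)}\cdot\dfrac{\varphi_a(1-\overline{a}^3\varphi_a^3)}{(1-\rho\varphi_a^3)^2}$ for the single vector $h_1$.

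Since $h_1\in\Lambda_1=\overline{\mathrm{span}}\{\varphi_a^{3j+1}:j\ge0\}$, I would write $h_1=\sum_{j\ge0}d_j\varphi_a^{3j+1}$ and determine the $d_j$ by a computation in the spirit of the proof of Claim~\ref{1}, using: the Gram identities $\langle\varphi_a^m,\varphi_a^n\rangle=\overline{a}^{\,n-m}$ for $m\le n$ (immediate, as $\varphi_a$ is inner, so $M_{\varphi_a}$ is isometric and $\langle\varphi_a^m,\varphi_a^n\rangle=\langle 1,\varphi_a^{n-m}\rangle=\overline{\varphi_a^{n-m}(0)}$); the explicit form of $h_0$ from Claim~\ref{1} and the evaluations $\langle h_0,\varphi_a^{3k}\rangle=c_0(1-|a|^4)\rho^k$ from Claim~\ref{3}; the relations $\langle e_j,e_k\rangle=\delta_{jk}/(1-|a|^2)$ recorded after Corollary~\ref{le}; and the anti‑unitarity of $C$, which gives $\|h_1\|^2=\|e_1-ae_0\|^2=(1+|a|^2)/(1-|a|^2)$ and $\langle h_1,h_0\rangle=\langle e_0,e_1-ae_0\rangle=-\overline{a}/(1-|a|^2)$. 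The expected outcome is a first‑order recursion with a resonant inhomogeneous term, $d_{k+1}=\rho d_k+d_0(\rho-\overline{a}^3)\rho^{k}$, whose solution $d_k=d_0\rho^{k-1}\bigl((k+1)\rho-k\overline{a}^3\bigr)$ sums to $h_1=d_0\,\dfrac{\varphi_a(1-\overline{a}^3\varphi_a^3)}{(1-\rho\varphi_a^3)^2}$ — the double pole $(1-\rho\varphi_a^3)^{-2}$ being exactly the signature of the resonant term. The leftover constant $d_0$ is then fixed, up to the phase already carried by $c_0$ (whose modulus $1/(1-|a|^4)$ was found earlier), by the value of $\|h_1\|^2$, giving $d_0=-c_0\overline{a}(1-|a|^6)/\bigl(a(1-|a|^4)\bigr)$. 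A convenient shortcut is to first record $e_1=ae_0-(1-|a|^2)K_a^{[1]}$ (from $\varphi_a=a-(1-|a|^2)zK_a$ and $K_a^{[1]}=zK_a^2$), so that $h_1=-(1-|a|^2)CK_a^{[1]}$; by (\ref{*2}) this again lies in $\Lambda_1$, and the same series computation identifies it.

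The delicate point is getting the recursion at all. The facts about $h_1$ that come for free — namely $h_1\in\Lambda_1$ together with $h_1\perp\Lambda_0^*$ and $h_1\perp\Lambda_2^*$ (these orthogonalities hold because eigenvectors of $C_\varphi$ and of $C_\varphi^*$ attached to non‑conjugate eigenvalues are orthogonal, and $\varphi$ has order $3$) — turn out to be automatically satisfied and to impose no constraint whatsoever on the $d_j$. So, in contrast with Claim~\ref{1}, no nontrivial recursion drops out of a vanishing inner product; it must be extracted from the quantitative data above (the precise coefficients of $h_0$, Claim~\ref{3}, and the anti‑unitarity of $C$), and the real labor is in organizing this and in nailing down the constant $d_0$. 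Once $h_1$ is in closed form, adding $\overline{a}h_0$ completes the argument.
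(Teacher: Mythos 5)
Your setup is the same as the paper's: you decompose $Ce_1=\overline{a}h_0+h_1$ with $h_1=C(e_1-ae_0)\in\Lambda_1$, and the recursion and closed form you guess (note $\rho-\overline{a}^3=\tilde\rho$, so your $d_{k+1}=\rho d_k+d_0(\rho-\overline{a}^3)\rho^k$ is exactly the paper's $\delta_{k+1}=\rho\delta_k+\tilde\rho\rho^k$) are correct. But the heart of the proof --- actually deriving that recursion --- is missing, and the reason you give for why it cannot be derived from a vanishing inner product is wrong. You correctly observe that $h_1\perp\Lambda_0^*$ and $h_1\perp\Lambda_2^*$ are automatic and vacuous; what you overlook is the \emph{other} orthogonality: since the $e_j$ are mutually orthogonal, $e_1\perp\Lambda_0^*$, and because $C$ is anti-unitary and maps $\Lambda_0^*$ \emph{onto} $\Lambda_0$ (Lemma \ref{zh}), this transports to $Ce_1\perp\Lambda_0$, i.e.\ $\langle Ce_1,\varphi_a^{3k}\rangle=0$ for all $k$. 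This is genuinely restrictive because $\Lambda_0$ and $\Lambda_1$ are eigenspaces of the non-normal operator $C_\varphi$ and are \emph{not} orthogonal to each other ($\langle\varphi_a^{3j+1},\varphi_a^{3k}\rangle\neq0$). Expanding $\langle \overline{a}h_0+\sum_jb_j\varphi_a^{3j+1},\varphi_a^{3k}\rangle=0$ with Claim \ref{3} supplying $\langle h_0,\varphi_a^{3k}\rangle=c_0(1-|a|^4)\rho^k$ is precisely what produces the initial value $b_0=-c_0\overline{a}(1-|a|^6)/\bigl(a(1-|a|^4)\bigr)$ and the resonant first-order recursion; this is the exact analogue of Claim \ref{1}, where $e_0\perp\Lambda_2^*$ transports to $h_0\perp\Lambda_2$.

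Without that step, the data you propose to use instead --- the two scalars $\|h_1\|^2=\|e_1-ae_0\|^2$ and $\langle h_1,h_0\rangle=\langle e_0,e_1-ae_0\rangle$ --- cannot possibly pin down the infinite sequence $(d_j)$; they would at best calibrate $|d_0|$ and one phase \emph{after} the shape $d_j=d_0\delta_j$ is already known. Your shortcut via $e_1-ae_0=-(1-|a|^2)K_a^{[1]}$ is a correct identity but does not help, since it only re-expresses the unknown vector $CK_a^{[1]}$ rather than computing it. So as written the argument has a genuine gap at its central computation; restoring the orthogonality $Ce_1\perp\Lambda_0$ closes it and reduces your proposal to the paper's proof.
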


\begin{proof}
Let $h_1=Ce_1$. Since $e_1-ae_0\in\Lambda_1^*$, by Lemma \ref{zh} we have $h_1-\overline{a}h_0\in\Lambda_1$. So we can assume that
$$h_1=\sum_{j=0}^\infty b_j\varphi_a^{3j+1}+\overline{a}h_0.$$

It is obvious that $e_1$ is orthogonal to $\Lambda_0^*$, so $h_1$ is orthogonal to $\Lambda_0$, which means that $\langle h_1,\varphi_a^{3k}\rangle=0$ for $k=0,1,2...$ So by using Claim \ref{3} we have the following equations,
\begin{align}\label{new1}
\sum_{j=0}^{\infty}b_ja^{3j+1}+c_0\overline{a}(1-|a|^4)=0,
\end{align}
and
\begin{align}\label{new2}
\sum_{j=0}^{k-1}b_j\overline{a}^{3k-3j-1}+\sum_{j=k}^{\infty}b_ja^{3j+1-3k}+c_0\overline{a}(1-|a|^4)\rho^k=0,
\end{align}
for $k=1,2,3...$

By taking $k=1$ in (\ref{new2}) we get
$$b_0\overline{a}^2+\sum_{j=1}^{\infty}b_ja^{3j-2}+c_0\overline{a}(1-|a|^4)\rho=0.$$
Comparing this equation with (\ref{new1}) we have
\begin{align}\label{new3}
b_0a\frac{1-|a|^4}{1-|a|^6}+c_0\overline{a}=0.
\end{align}
Replacing $k$ by $k+1$ in (\ref{new2}) we get
$$\sum_{j=0}^{k}b_j\overline{a}^{3k-3j+2}+\sum_{j=k+1}^{\infty}b_ja^{3j-2-3k}+c_0\overline{a}(1-|a|^4)\rho^{k+1}=0.$$
Combining this equation with (\ref{new2}) we have
\begin{align}\label{new4}
\sum_{j=0}^{k-1}b_j\overline{a}^{3k-3j-1}+b_ka\frac{1-|a|^4}{1-|a|^6}+c_0\overline{a}\rho^k=0
\end{align}
for $k=1,2,3...$

Now let
$$\delta_j=-\frac{b_ja(1-|a|^4)}{c_0\overline{a}(1-|a|^6)},$$
then (\ref{new3}) and (\ref{new4}) shows that $\delta_0=1$, $\delta_1=\rho+\tilde\rho$, and
$$\delta_{k+1}=\rho\delta_k+\tilde\rho\rho^k$$
for $k=1,2,3...$ Hence we can get that
$$\delta_k=\rho^k+k\tilde\rho\rho^{k-1}$$
for $k=0,1,2...$

Thus we have
\begin{align*}
h_1-\overline{a}h_0&=\sum_{j=0}^\infty b_j\varphi_a^{3j+1}
\\&=-c_0\frac{\overline{a}(1-|a|^6)}{a(1-|a|^4)}\sum_{j=0}^\infty \delta_j\varphi_a^{3j+1}
\\&=-c_0\frac{\overline{a}(1-|a|^6)}{a(1-|a|^4)}\left(\sum_{j=0}^\infty\rho^j\varphi_a^{3j+1}+\sum_{j=0}^\infty j\tilde\rho\rho^{j-1}\varphi_a^{3j+1}\right)
\\&=-c_0\frac{\overline{a}(1-|a|^6)}{a(1-|a|^4)}\left(\frac{\varphi_a}{1-\rho\varphi_a^3}+\frac{\tilde\rho\varphi_a^4}{(1-\rho\varphi_a^3)^2}\right)
\\&=-c_0\frac{\overline{a}(1-|a|^6)}{a(1-|a|^4)}\cdot\frac{\varphi_a(1-\overline{a}^3\varphi_a^3)}{(1-\rho\varphi_a^3)^2}.
\end{align*}
\end{proof}

Now we can prove our final result as follows.

\begin{theorem}\label{main}
If $\varphi$ is an elliptic automorphism of order 3 with fixed point $a\in D\backslash \{0\}$, then $C_\varphi$ is not complex symmetric on $H^2(D)$.
\end{theorem}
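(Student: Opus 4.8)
The plan is to derive a contradiction by exploiting the two descriptions of $Ce_0$ and $Ce_1$ obtained in Claims \ref{1} and \ref{4}, together with the isometry and conjugate-linearity of $C$ applied to a third, redundant vector. The heart of the argument is that the conjugation $C$ must respect \emph{all} inner-product relations among the vectors $e_0, e_1, e_2, \dots$, and in particular among the vectors $e_0$, $e_1$, and $e_2 - a e_1$ (which lies in $\Lambda_2^*$). Since Corollary \ref{le} tells us $\Lambda_0^*, \Lambda_1^*, \Lambda_2^*$ are spanned by such shifted vectors, and since $C$ swaps $\Lambda_m^*$ with $\Lambda_m$, the images $Ce_0, Ce_1$ already pin down a great deal; computing $Ce_2$ by the same bootstrap procedure as in Claims \ref{1} and \ref{4} (solving the orthogonality recursions forced by $e_2 \perp \Lambda_0^*$ and $e_2 \perp \Lambda_1^*$) will produce a closed form involving $c_0$, $\rho$, $\tilde\rho$, and $\varphi_a$. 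The contradiction will come from a consistency condition that over-determines these constants.

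Concretely, I would proceed as follows. First, I would use the relation $C_\varphi = C C_\varphi^* C$, or more directly the fact that $C$ is an isometric involution, to extract one more scalar identity: since $C$ is a conjugation, $\langle Ce_0, Ce_1\rangle = \langle e_1, e_0\rangle = 0$, and likewise $\langle Ce_0, Ce_0\rangle = \|e_0\|^2$ (already used) and $\langle Ce_1, Ce_1\rangle = \|e_1\|^2 = (1-|a|^2)^{-1}$. Plugging the explicit formulas from Claims \ref{1} and \ref{4} into $\langle Ce_1, Ce_1\rangle = (1-|a|^2)^{-1}$ gives an equation purely in $|a|$ (after using $|c_0|^2 = (1-|a|^4)^{-2}$ and the known norms/inner products of the inner-function building blocks $\varphi_a^k$, exactly as in the proof of Claim 3.5). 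The orthogonality $\langle Ce_0, Ce_1\rangle = 0$ gives a second such equation. The plan is to show these two equations (or one of them alone, or their combination with the $e_2$ computation) cannot hold for any $a \in D\setminus\{0\}$, forcing the contradiction and hence proving $C_\varphi$ is not complex symmetric.

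The main obstacle I anticipate is the bookkeeping in evaluating $\|Ce_1\|^2$ and $\langle Ce_0, Ce_1\rangle$: the function $Ce_1$ is a rational function of $\varphi_a$ with a \emph{squared} denominator $(1-\rho\varphi_a^3)^2$, so one must compute $H^2$-norms and inner products of things like $\varphi_a(1-\overline a^3\varphi_a^3)/(1-\rho\varphi_a^3)^2$ against $(1-\overline a^3\varphi_a^3)/(1-\rho\varphi_a^3)$. Since $\varphi_a$ is an inner function, composition with $\varphi_a$ is an isometry on $H^2$, so these reduce to norms and inner products of rational functions of $z$ on the disk, computable by residues or by expanding in the orthonormal-ish system $\{\varphi_a^k\}$ (using $\langle \varphi_a^j, \varphi_a^k\rangle$ which one reads off from $\|e_j\|^2 = (1-|a|^2)^{-1}$ and $\langle e_j, e_k\rangle = 0$, noting $e_k = K_a\varphi_a^k$). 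The computation is routine but delicate; the key simplification to keep an eye out for is that $\rho$ and $\tilde\rho$ satisfy $\rho\tilde\rho = |a|^4 \cdot \frac{(1-|a|^2)(1-|a|^6)}{(1-|a|^4)^2}$-type relations and $\rho\tilde\rho^{-1}$, $|\rho|^2$, etc., have clean forms in $|a|^2$, so the final equation should collapse to a polynomial identity in $t = |a|^2 \in (0,1)$ with no root in that interval. Once that polynomial is exhibited and shown to be nonvanishing on $(0,1)$, the theorem follows: the assumption that $C_\varphi$ is $C$-symmetric is untenable.

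Finally, I would record that combining Theorem \ref{main} with Propositions \ref{propa} and \ref{propb} immediately yields Main Result A (stated as a corollary), since the only remaining automorphism cases---elliptic of order $2$, and rotations---are already known to give complex symmetric $C_\varphi$, while elliptic of order $3$ non-rotations are now excluded and all other automorphisms were excluded by Propositions \ref{propa} and \ref{propb}.
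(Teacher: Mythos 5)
Your proposal follows essentially the same route as the paper: the paper's proof takes the explicit formulas for $h_0=Ce_0$ and $h_1=Ce_1$ from Claims \ref{1} and \ref{4} and compares $\|h_1-\overline{a}h_0\|^2$ computed from the closed form (by writing the relevant function as $\beta_1g^2+\beta_2g+\beta_3$ for the inner function $g=(\overline{\rho}-\varphi_a^3)/(1-\rho\varphi_a^3)$ and using $\langle g^j,g^k\rangle=g(0)^{j-k}$) against $\|e_1-ae_0\|^2=(1+|a|^2)/(1-|a|^2)$ obtained from the isometry of $C$ --- exactly the combination of $\|Ce_1\|=\|e_1\|$ and $\langle Ce_0,Ce_1\rangle=0$ that you describe, with no need for the $e_2$ computation. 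The identity collapses, as you predict, to a polynomial equation in $|a|^2$, namely $|a|^2+|a|^4=2$, which has no root with $0<|a|<1$.
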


\begin{proof}
Suppose that $C_\varphi$ is $C$-symmetric with respect to conjugation $C$. Then Claims \ref{1} to \ref{4} hold.

Let
$$f=\frac{1-|a|^6}{1-|a|^4}\cdot\frac{1-\overline{a}^3\varphi_a^3}{(1-\rho\varphi_a^3)^2},$$
Then
$$||f||=|c_0|^{-1}\cdot||h_1-\overline{a}h_0||=(1-|a|^4)||h_1-\overline{a}h_0||.$$

A tedious calculation shows that $f=\beta_1g^2+\beta_2g+\beta_3$, where $$g=\frac{\overline{\rho}-\varphi_a^3}{1-\rho\varphi_a^3},$$ and
\begin{align*}
\beta_1&=\frac{(1-|a|^4)(1+|a|^2)^2}{1-|a|^6}(\rho^2-\overline{a}^3\rho)=\frac{\overline{a}^4}{a^2}(1+|a|^2);
\\\beta_2&=\frac{(1-|a|^4)(1+|a|^2)^2}{1-|a|^6}(-2\rho+\overline{a}^3+\overline{a}^3|\rho|^2)=\frac{\overline{a}^2}{a}(1+|a|^2)(2+|a|^2);
\\\beta_3&=\frac{(1-|a|^4)(1+|a|^2)^2}{1-|a|^6}(1-\overline{a}^3\overline{\rho})=(1+|a|^2)^2.
\end{align*}

So
\begin{align*}
||f||^2&=\langle\beta_1g^2+\beta_2g+\beta_3,\beta_1g^2+\beta_2g+\beta_3\rangle
\\&=|\beta_1|^2+|\beta_2|^2+|\beta_3|^2+2\Re\left(\beta_1\overline{\beta_2}g(0)+\beta_2\overline{\beta_3}g(0)+\beta_1\overline{\beta_3}g(0)^2\right)
\\&=(1+2|a|^2-2|a|^4-|a|^6)(1+|a|^2)^2.
\end{align*}

On the other hand, since $C$ is isometric,
\begin{align*}
||f||^2&=(1-|a|^4)^2||h_1-\overline{a}h_0||^2
\\&=(1-|a|^4)^2||e_1-\overline{a}e_0||^2
\\&=(1-|a|^4)^2\frac{1+|a|^2}{1-|a|^2}
\\&=(1-|a|^4)(1+|a|^2)^2.
\end{align*}
So we have
\begin{align*}
(1+2|a|^2-2|a|^4-|a|^6)(1+|a|^2)^2&=(1-|a|^4)(1+|a|^2)^2
\\2|a|^2-|a|^4-|a|^6&=0
\\|a|^2+|a|^4&=2,
\end{align*}
which is impossible since $a\in D$.
\\
\qedhere
\end{proof}

As a corollary we can get our Main Result A. It gives a complete description of the automorphisms which can induce complex symmetric operators on $H^2(D)$.

\begin{corollary}\label{last}
Suppose $\varphi$ is an automorphism of $D$. Then $C_\varphi$ is complex symmetric on $H^2(D)$ if and only if $\varphi$ is either a rotation or an elliptic automorphism of order two.
\end{corollary}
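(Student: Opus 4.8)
The plan is to deduce the corollary by assembling Propositions \ref{propa} and \ref{propb} with Theorem \ref{main}, treating the two implications of the ``if and only if'' separately. For the sufficiency direction I would argue as follows: if $\varphi$ is a rotation $\varphi(z)=sz$ with $|s|=1$, then $C_\varphi$ is unitary, hence normal, hence complex symmetric by the fact recalled in the introduction that every normal operator is complex symmetric; if $\varphi$ is an elliptic automorphism of order two, then $\varphi$ is an involution, so $C_\varphi^2=I$ satisfies a degree-two polynomial equation and is therefore complex symmetric (equivalently, one invokes the order-two clause of Proposition \ref{propb}).

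For the necessity direction, assume $C_\varphi$ is $C$-symmetric for some conjugation $C$. Proposition \ref{propa} shows $\varphi$ has a fixed point in $D$, so $\varphi$ is neither hyperbolic nor parabolic; it is therefore the identity (a rotation with $s=1$) or an elliptic automorphism of some order $q$ with $2\leqslant q\leqslant\infty$. I would then split on $q$. The case $q=2$ is exactly an order-two elliptic automorphism, with nothing to prove. For $4\leqslant q\leqslant\infty$, Proposition \ref{propb} forces $\varphi$ to be a rotation. The only remaining case is $q=3$: the fixed point $a\in D$ is either $0$, in which case $\varphi(z)=sz$ with $s$ a primitive cube root of unity and $\varphi$ is a rotation, or $a\in D\setminus\{0\}$, in which case Theorem \ref{main} says $C_\varphi$ is not complex symmetric, contradicting the assumption. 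In every surviving case $\varphi$ is a rotation or an order-two elliptic automorphism, which is the claim.

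There is no real obstacle here, since all of the analytic work has already been done: the genuinely new ingredient is Theorem \ref{main}, which disposes of the order-three gap left open by Proposition \ref{propb}. The only point worth stating explicitly is the elementary observation that an elliptic automorphism of order three fixing the origin is itself a rotation, so that the hypothesis $a\in D\setminus\{0\}$ in Theorem \ref{main} covers precisely the non-rotation subcase and the dichotomy is exhaustive.
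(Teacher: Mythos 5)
Your proposal is correct and follows exactly the route the paper intends: the corollary is obtained by assembling Proposition \ref{propa}, Proposition \ref{propb}, and Theorem \ref{main}, with the sufficiency direction coming from the normality of rotation-induced operators and the involution identity $C_\varphi^2=I$. Your explicit remark that an order-three elliptic automorphism fixing the origin is a rotation (so that Theorem \ref{main} covers exactly the remaining subcase) is a worthwhile clarification that the paper leaves implicit.
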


\section{Non-automorphisms}

In this section we will consider the non-automorphic cases and give the proof of our Main Result B. According to Proposition \ref{propa}, we will always assume that $\varphi\in LFT(D)$ has a fixed point in $D$.

\subsection{linear fractional transformations that fix $0$}

First we shall consider the linear fractional transformations who fix the point $0$. So throughout this subsection we assume $\varphi$ is of the form
$$\varphi(z)=\frac{bz}{1-cz}$$
for $z\in D$.

\begin{remark}
It is easy to check that $\varphi(z)=\frac{bz}{1-cz}$ is a self-map of $D$ if and only if $|b|+|c|\leqslant 1$. Moreover, we may assume that $\varphi$ is neither a constant nor a linear transform on $D$. This means neither $b$ nor $c$ is zero.
\end{remark}

The next Lemma gives the solution of the Schroeder's equation of such $\varphi$.

\begin{lemma}\label{tz}
Suppose $b,c\ne 0$, $|b|+|c|\leqslant 1$, and $\varphi(z)=bz/(1-cz)$. Let
$$\sigma(z)=\frac{z}{1-\eta z},$$
where $\eta=c/(1-b)$.
Then $\sigma\comp\varphi=b\sigma$.
\end{lemma}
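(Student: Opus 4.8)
The statement to prove is Lemma~\ref{tz}: with $\varphi(z)=bz/(1-cz)$ and $\sigma(z)=z/(1-\eta z)$ where $\eta=c/(1-b)$, one has $\sigma\circ\varphi=b\sigma$. This is a purely computational identity between two rational functions, so the plan is simply to compute both sides and check they agree.

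\medskip

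\textbf{Approach.} First I would substitute $\varphi(z)$ into $\sigma$ and simplify. We have
\[
\sigma(\varphi(z))=\frac{\varphi(z)}{1-\eta\,\varphi(z)}=\frac{\dfrac{bz}{1-cz}}{1-\eta\dfrac{bz}{1-cz}}=\frac{bz}{(1-cz)-\eta b z}=\frac{bz}{1-(c+\eta b)z}.
\]
Next I would verify that $c+\eta b=\eta$, i.e.\ that $\eta(1-b)=c$, which is exactly the defining relation $\eta=c/(1-b)$ (note $b\neq 1$ since $|b|<1$, so $\eta$ is well defined). Substituting this back gives
\[
\sigma(\varphi(z))=\frac{bz}{1-\eta z}=b\cdot\frac{z}{1-\eta z}=b\,\sigma(z),
\]
which is the claimed identity. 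One should also remark that $\sigma$ is a genuine linear fractional transformation (its determinant $1$ is nonzero), so the Schr\"oder equation makes sense; and that $\eta$ is chosen precisely so the two poles line up.

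\medskip

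\textbf{Main obstacle.} There is essentially no obstacle here --- the only thing that requires a moment's thought is recognizing in advance \emph{why} $\eta=c/(1-b)$ is the right choice (rather than producing it from a fixed-point/derivative computation): one wants $\sigma\circ\varphi$ to again be of the form $bz/(1-\eta z)$, and forcing the coefficient of $z$ in the denominator to equal $\eta$ yields the equation $c+\eta b=\eta$, hence $\eta=c/(1-b)$. Once $\eta$ is in hand the verification is a one-line algebraic simplification, so the proof will be very short.
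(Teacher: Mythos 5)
Your computation is correct and is exactly the "simple and direct calculation" the paper invokes without writing out; the key identity $c+\eta b=\eta$ is verified properly, and your observation that $|b|\leqslant 1-|c|<1$ guarantees $\eta$ is well defined is a worthwhile extra detail. Nothing further is needed.
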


\begin{proof}
Just a simple and direct calculation.
\\
\qedhere
\end{proof}

\begin{remark}
Note that $1/\eta=(1-b)/c$ is the other fixed point of $\varphi(z)=bz/(1-cz)$ on $\hat{\mathbb{C}}$ except for $0$, so it lies outside the unit disk $D$.
\end{remark}

As a corollary of Lemma \ref{tz}, the next result shows that if $C_\varphi$ is complex symmetric, then $1/\eta$ can never belong to the unit circle $\partial D$. In fact, it is actually a part of Theorem 2.10 in \cite{Arxiv}. However, we still present a different proof here, because this proof will be used in Proposition \ref{31} in the next subsection.

\begin{corollary}\label{yg}
Suppose $\varphi\in LFT(D)$ is not an automorphism and suppose that $\varphi(0)=0$. If $C_\varphi$ is complex symmetric on $H^2(D)$, then $0$ is the only fixed point of $\varphi$ in $\overline{D}$.
\end{corollary}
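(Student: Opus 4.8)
The plan is to argue by contradiction using the intertwining relation from Lemma \ref{tz} together with Lemma \ref{zh}. Suppose $C_\varphi$ is complex symmetric with respect to a conjugation $C$, and suppose the other fixed point $1/\eta$ of $\varphi$ lies on the unit circle $\partial D$. Since $\varphi(0)=0$, equations (\ref{*1})--(\ref{*3}) apply with $a=0$: in particular $C_\varphi^* K_0 = K_0$, i.e. $C_\varphi^* 1 = 1$, so $1$ is an eigenvector of $C_\varphi^*$ for the eigenvalue $1$, and likewise $C_\varphi^* z = \overline{b} z$ since $\varphi'(0)=b$. By Lemma \ref{zh}, $1 = C(C_\varphi^* 1)$-type reasoning gives that $C$ carries $\mathrm{Ker}(C_\varphi^* - \overline{\lambda})$ onto $\mathrm{Ker}(C_\varphi - \lambda)$; so I want to locate the point spectrum of $C_\varphi$ itself.

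The key computational step is to use Lemma \ref{tz}: $\sigma \comp \varphi = b\sigma$ with $\sigma(z) = z/(1-\eta z)$. Composing with $\sigma^n$ we get $\sigma^n \comp \varphi = b^n \sigma^n$, which says $C_\varphi \sigma^n = b^n \sigma^n$, so each power $\sigma^n$ is an eigenvector of $C_\varphi$ with eigenvalue $b^n$ — provided $\sigma^n \in H^2(D)$. Here is where the hypothesis $1/\eta \in \partial D$ bites: if $|\eta| < 1$ (equivalently $1/\eta$ outside $\overline{D}$) then $\sigma$ is bounded on $D$ and every $\sigma^n$ lies in $H^2$; but if $|\eta| = 1$, the function $\sigma(z) = z/(1-\eta z)$ has a pole on the boundary and $\sigma^n$ fails to be in $H^2(D)$ once $n$ is large (one checks $\sigma^n \notin H^2$ for $n \geq 1$, since near the boundary point $\bar\eta$ it blows up like $(1-\eta z)^{-n}$, whose $H^2$ norm diverges for $n\geq 1$). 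So the eigenvector that should exist does not. To turn this into a contradiction I instead look at the adjoint side, which is the honest approach: by Proposition \ref{he} (the restatement of Theorem 2.10 in \cite{Arxiv}) we already know $C_\varphi$ complex symmetric forces no boundary fixed point — but since the point of this corollary is to give an independent proof usable in Proposition \ref{31}, I will argue directly.

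Concretely: assume $|\eta|=1$. Consider the function $u = K_{1/\eta}$ formally, or better, exploit that $\varphi$ maps a neighborhood of $\bar\eta$ on $\partial D$ to itself with $\varphi'$ at that boundary fixed point having modulus determined by $b, c$; one shows $\varphi$ then has a boundary fixed point and the Denjoy--Wolff / angular-derivative analysis shows $C_\varphi$ has a nontrivial kernel-related obstruction. The cleanest route, and the one I would write out, is: the adjoint $C_\varphi^*$ has $\overline{b}^{\,n}$ as approximate eigenvalues accumulating at $0$ coming from $1/\eta \in \partial D$, while on the $C_\varphi$ side the corresponding eigenvectors $\sigma^n$ escape $H^2$; feeding this asymmetry through Lemma \ref{zh} (which demands a genuine bijection between $\mathrm{Ker}(C_\varphi - b^n)$ and $\mathrm{Ker}(C_\varphi^* - \overline{b}^{\,n})$) produces the contradiction, because $C_\varphi^*$ does have the eigenvalue $\overline{b}^{\,n}$ (with eigenvector obtained by differentiating $K_0$ or by the formula analogous to (\ref{*2})) for every $n$, yet $C_\varphi$ does not have $b^n$ as an eigenvalue for large $n$ when $|\eta| = 1$. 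The main obstacle is the last point — verifying precisely that $b^n$ fails to be an eigenvalue of $C_\varphi$ when $1/\eta \in \partial D$, i.e. that Lemma \ref{tz} really produces all the eigenfunctions and there is no other eigenfunction for $b^n$ living in $H^2$; this requires knowing the full point spectrum of a linear-fractional composition operator with a boundary fixed point, which is classical (e.g. from \cite{CC}) and shows the eigenvalues are exactly $\{b^n\}$ with $|b|<1$ only in the interior-fixed-point regime. Once that is pinned down, comparing the two sides via Lemma \ref{zh} gives $1/\eta \notin \partial D$, hence $1/\eta$ lies outside $\overline D$ and $0$ is the only fixed point of $\varphi$ in $\overline D$.
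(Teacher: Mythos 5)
Your core argument is exactly the paper's proof: (\ref{*2}) makes $\overline{b}=\overline{\varphi'(0)}$ an eigenvalue of $C_\varphi^*$, Lemma \ref{zh} forces $b$ to be an eigenvalue of $C_\varphi$, and Theorem 2.63 of \cite{CC} together with Lemma \ref{tz} shows the only candidate eigenfunction is $\sigma(z)=z/(1-\eta z)=\sum_{j}\eta^{j}z^{j+1}$, which lies in $H^2(D)$ only if $|\eta|<1$. The detours through approximate eigenvalues, Denjoy--Wolff theory, and high powers $\sigma^n$ are unnecessary---the case $n=1$ already gives the contradiction---and the ingredient you flag as the ``main obstacle'' (that the eigenspace for $b$ is one-dimensional, spanned by the Schroeder solution) is precisely what the paper cites \cite{CC} for, so your proof is complete and identical in substance.
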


\begin{proof}
Assume that $\varphi(z)=bz/(1-cz)$. Then the two fixed points of $\varphi$ on $\hat{\mathbb{C}}$ are $0$ and $(1-b)/c$.

Since $\varphi(0)=0$, (\ref{*2}) shows that $\overline{b}=\overline{\varphi'(0)}$ is an eigenvalue of $C_\varphi^*$. Then the complex symmetry of $C_\varphi$ implies that $b$ is an eigenvalue of $C_\varphi$. However, Theorem 2.63 in \cite{CC}  and Lemma \ref{tz} show that each eigenvector for $C_\varphi$ corresponding to the eigenvalue $b$ can only be a constant multiple of $\sigma(z)=z/(1-\eta z)$ where $\eta=c/(1-b)$. So we must have $\sigma\in H^2(D)$, which means that $|\eta|<1$ since
$$\sigma(z)=\sum_{j=0}^\infty \eta^j z^{j+1}.$$
Therefore, $(1-b)/c=1/\eta$, as a fixed point of $\varphi$, lies outside $\overline{D}$.
\\
\qedhere
\end{proof}

\subsection{linear fractional transformations that fix $a\in D\backslash \{0\}$}

Now we turn to the general cases. We will assume in this subsection that $\varphi$ is a linear fractional map with a fixed point in $D$ other than $0$. The next proposition follows from the proof of Corollary \ref{yg}.

\begin{proposition}\label{31}
Suppose $\varphi\in LFT(D)$ is not an automorphism and $a\in D\backslash \{0\}$ is a fixed point of $\varphi$. If $C_\varphi$ is complex symmetric on $H^2(D)$, then $a$ is the only fixed point of $\varphi$ in $\overline{D}$.
\end{proposition}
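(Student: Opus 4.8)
The plan is to reduce Proposition~\ref{31} to the already-proved Corollary~\ref{yg} by conjugating $C_\varphi$ with the involution $C_{\varphi_a}$, which moves the fixed point from $a$ to $0$. Concretely, set $\psi=\varphi_a\comp\varphi\comp\varphi_a$. Since $\varphi_a$ is an involutive automorphism of $D$ exchanging $0$ and $a$, the map $\psi$ is again a linear fractional self-map of $D$, it fixes $0$, and it is an automorphism precisely when $\varphi$ is. Moreover $C_\psi=C_{\varphi_a}C_\varphi C_{\varphi_a}$, so $C_\psi$ is similar to $C_\varphi$ via the \emph{invertible} operator $C_{\varphi_a}$. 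The fixed points of $\psi$ on $\hat{\mathbb C}$ are exactly the images under $\varphi_a$ of the fixed points of $\varphi$; since $\varphi_a$ maps $D$ onto $D$, $\overline D$ onto $\overline D$ and $\partial D$ onto $\partial D$, the statement ``$a$ is the only fixed point of $\varphi$ in $\overline D$'' is equivalent to ``$0$ is the only fixed point of $\psi$ in $\overline D$''. So it suffices to show that if $C_\varphi$ is complex symmetric then $C_\psi$ is too, and then apply Corollary~\ref{yg} to $\psi$.

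The one genuine point to check is that complex symmetry is preserved under conjugation by $C_{\varphi_a}$. This is where I will use that $C_{\varphi_a}$ is, up to a conjugation-compatible adjustment, unitary-like: recall from the theory of involutive automorphisms (see \cite{Invo}) that $C_\varphi$ for $\varphi$ an involution is itself complex symmetric, and more relevantly that $C_{\varphi_a}^{-1}=C_{\varphi_a}$. The cleanest route: if $C_\varphi=CC_\varphi^*C$ for a conjugation $C$, I want a conjugation $C'$ with $C_\psi=C'C_\psi^*C'$. Since $C_\psi=C_{\varphi_a}C_\varphi C_{\varphi_a}$ and $C_\psi^*=C_{\varphi_a}^*C_\varphi^*C_{\varphi_a}^*$, a natural candidate is $C'=C_{\varphi_a}^*CC_{\varphi_a}^{*-1}$ or a variant thereof; one must verify $C'$ is conjugate-linear (clear, since $C$ is and the other factors are linear), involutive, and isometric. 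Isometry and involutivity will follow once the adjoint/inverse bookkeeping for $C_{\varphi_a}$ is done carefully — this is the part that requires attention rather than ingenuity.

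Alternatively, and perhaps more in the spirit of the preceding proofs, I can avoid the abstract similarity argument entirely and just repeat the eigenvalue/eigenvector computation of Corollary~\ref{yg} directly for $\varphi$ fixing $a$. Namely: by \eqref{*2}, $\overline{\varphi'(a)}$ is an eigenvalue of $C_\varphi^*$, so by Lemma~\ref{zh} the complex symmetry of $C_\varphi$ forces $\varphi'(a)$ to be an eigenvalue of $C_\varphi$ with a genuine (nonzero, in $H^2(D)$) eigenvector $f$. Using the linear-fractional model theory (Theorem~2.63 in \cite{CC}) together with the Schr\"oder equation for $\varphi$ at $a$ — which one obtains by conjugating Lemma~\ref{tz} through $\varphi_a$, giving an intertwiner $\sigma_a=\sigma\comp\varphi_a$ with $\sigma_a\comp\varphi=\varphi'(a)\sigma_a$ — the eigenvector must be a constant multiple of $\sigma_a$. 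Requiring $\sigma_a\in H^2(D)$ then forces the second fixed point of $\varphi$ to lie outside $\overline D$, exactly as before; the pole of $\sigma_a$ is at the second fixed point, and membership in $H^2(D)$ excludes a pole on $\overline D$.

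I expect the main obstacle to be purely organizational: making sure that the second fixed point of $\varphi$ (the ``model'' attracting/repelling point) really is the pole of the intertwining map $\sigma_a$, and that one has correctly ruled out the degenerate sub-cases — $\varphi$ an automorphism (handled by the hypothesis ``not an automorphism''), $\varphi$ having a double fixed point at $a$ (i.e.\ $\varphi$ parabolic-type fixing $a$, in which case $\varphi'(a)=1$ and one checks the eigenspace of $1$ separately, or notes the second fixed point coincides with $a\in D$ and argues directly that this forces $\varphi$ to be non--complex-symmetric or an automorphism), and $\varphi'(a)=0$. None of these requires a new idea beyond what is already in Corollary~\ref{yg}; the work is in transporting that argument cleanly along $\varphi_a$.
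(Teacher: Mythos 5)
Your second (``alternatively'') argument is essentially the paper's own proof of Proposition~\ref{31}: apply \eqref{*2} at $a$ to get $\overline{\varphi'(a)}$ as an eigenvalue of $C_\varphi^*$, invoke Lemma~\ref{zh} to produce an $H^2$ eigenvector of $C_\varphi$ for $\varphi'(a)$, identify that eigenvector (via Theorem~2.63 of \cite{CC} and the conjugated Schr\"oder intertwiner $\sigma\circ\varphi_a$ from Lemma~\ref{tz}) as a scalar multiple of a linear fractional map whose pole is the second fixed point of $\varphi$, and conclude that membership in $H^2(D)$ forces that pole outside $\overline{D}$. The degenerate cases you worry about do not occur: for a nonconstant linear fractional $\varphi$ one has $\varphi'(a)\neq 0$, and by Schwarz--Pick a non-automorphic self-map with interior fixed point has $|\varphi'(a)|<1$, so there is no double fixed point at $a$. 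This route is complete and is the one you should write up.

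Your first route, however, has a genuine gap, and it is not ``adjoint/inverse bookkeeping.'' Complex symmetry is a unitary invariant, not a similarity invariant, and $C_{\varphi_a}$ is \emph{not} unitary (it is invertible, but $C_{\varphi_a}^*C_{\varphi_a}\neq I$ unless $a=0$). Consequently your candidate $C'=C_{\varphi_a}^*CC_{\varphi_a}^{*-1}$ is a conjugate-linear involution but fails to be isometric, so it is not a conjugation; and no variant built from $C_{\varphi_a}$ and its adjoint will both intertwine $C_\psi$ with $C_\psi^*$ and be isometric. The natural unitary fix --- replacing $C_{\varphi_a}$ by the unitary weighted composition operator $W_af=(1-|a|^2)^{1/2}(1-\overline{a}z)^{-1}f(\varphi_a(z))$ --- preserves complex symmetry but conjugates $C_\varphi$ to a \emph{weighted} composition operator with symbol $\psi$, not to $C_\psi$, so Corollary~\ref{yg} cannot then be applied. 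In short: the claim ``$C_\varphi$ complex symmetric $\Rightarrow$ $C_{\varphi_a}C_\varphi C_{\varphi_a}$ complex symmetric'' is unjustified and should be abandoned in favor of the eigenvector argument, which transports correctly because eigenvectors (unlike conjugations) only need the bounded invertible similarity $C_{\varphi_a}$.
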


\begin{proof}
Let $\tilde\varphi=\varphi_a\comp\varphi\comp\varphi_a$. Then $\tilde\varphi(0)=0$. We can assume that $\tilde\varphi(z)=bz/(1-cz).$ Note that $b=\varphi'(a)$, and the other fixed point of $\varphi$ on $\hat{\mathbb{C}}$ except for $a$ is $\varphi_a(\frac{1-b}{c})$. Again by (\ref{*2}) we can know that $\overline{b}=\overline{\varphi'(a)}$ is an eigenvalue of $C_\varphi^*$. So the complex symmetry of $C_\varphi$ shows that $b=\varphi'(a)$ is an eigenvalue of $C_\varphi$. By the proof of Corollary \ref{yg}, we have $|(1-b)/c|>1$. Therefore $\varphi_a(\frac{1-b}{c})$ lies outside $\overline{D}$ since $\varphi_a$ is an automorphism of $D$.
\\
\qedhere
\end{proof}

The next Theorem shows that if the fixed point of $\varphi$ in $D$ is not zero and $C_\varphi$ is complex symmetric, then the fixed point of $\varphi$ outside $\overline{D}$ must be $\infty$.

\begin{theorem}\label{final}
Suppose $\varphi\in LFT(D)$ is not an automorphism and $a\in D\backslash \{0\}$ is a fixed point of $\varphi$. If $C_\varphi$ is complex symmetric on $H^2(D)$, then $\varphi$ is a polynomial of degree one with respect to $z$.
\end{theorem}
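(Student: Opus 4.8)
The plan is to assume $C_\varphi$ is $C$-symmetric and to prove $\varphi''(a)=0$; since a non-constant linear fractional map whose second derivative vanishes at a non-pole must be of the form $\varphi(z)=\varphi'(a)(z-a)+a$, this gives the conclusion. First I would normalize: conjugating $\varphi$ by a rotation replaces $C_\varphi$ by a unitarily equivalent operator, hence preserves complex symmetry (and preserves being affine), so one may arrange $a=r\in(0,1)$; put $t=1-r^2$. As in Proposition \ref{31} write $\tilde\varphi=\varphi_a\comp\varphi\comp\varphi_a$, so $\tilde\varphi(z)=bz/(1-cz)$ with $b=\varphi'(a)$ and $c\neq0$; by Lemma \ref{tz} the map $\Sigma:=\sigma\comp\varphi_a$, where $\sigma(z)=z/(1-\eta z)$ and $\eta=c/(1-b)$, satisfies $\Sigma\comp\varphi=b\Sigma$. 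By the proof of Proposition \ref{31}, $b$ is an eigenvalue of $C_\varphi$ and $|\eta|<1$, so $\Sigma$ is bounded on $D$ (hence in $H^2(D)$); the Schwarz lemma gives $0<|b|<1$, so $1,b,b^2$ are distinct, and by Theorem 2.63 in \cite{CC} (as in the proof of Corollary \ref{yg}) $\mathrm{Ker}(C_\varphi-b^n)=\mathrm{span}\{\Sigma^n\}$ for every $n\geq0$.

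The second step is to determine $C$ on the reproducing functions $K_a,K_a^{[1]},K_a^{[2]}$. By formulas (\ref{*1})–(\ref{*3}) their span is $C_\varphi^*$-invariant, and on it $C_\varphi^*$ has the simple eigenvalues $1,\overline{b},\overline{b}^2$ with eigenvectors $K_a$, $K_a^{[1]}$, and $w^*:=K_a^{[2]}+\frac{\overline{\varphi''(a)}}{\overline{b}^2-\overline{b}}K_a^{[1]}$. By Lemma \ref{zh}, $C$ carries $\mathrm{Ker}(C_\varphi^*-\overline{b^n})$ onto $\mathrm{Ker}(C_\varphi-b^n)=\mathrm{span}\{\Sigma^n\}$, so $CK_a=\lambda\cdot1$, $CK_a^{[1]}=\mu\Sigma$, $Cw^*=\alpha\Sigma^2$ for nonzero constants $\lambda,\mu,\alpha$; rewriting $K_a^{[2]}=w^*-\frac{\overline{\varphi''(a)}}{\overline{b}^2-\overline{b}}K_a^{[1]}$ and applying $C$ gives
$$CK_a^{[2]}=\alpha\Sigma^2+\beta\Sigma,\qquad\beta=\frac{\mu\,\varphi''(a)}{b-b^2}.$$
Everything now reduces to showing $\beta=0$.

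Next I would exploit that $C$ is an isometric conjugation, $\langle Cx,Cy\rangle=\langle y,x\rangle$, applied to these three functions. From $\Sigma(z)=(1-r\eta)^{-1}(r-z)(1-\nu z)^{-1}$ with $\nu=(r-\eta)/(1-r\eta)$ and $|\nu|<1$ one computes $\Sigma(0)=r/(1-r\eta)$ and an explicit value of $\|\Sigma\|^2$; together with $\|K_a\|^2=t^{-1}$, $\|K_a^{[1]}\|^2=(1+r^2)t^{-3}$, $\langle K_a^{[1]},K_a\rangle=rt^{-2}$ these feed into $|\lambda|^2=\|K_a\|^2$, $|\mu|^2\|\Sigma\|^2=\|K_a^{[1]}\|^2$ and $\lambda\overline{\mu}\,\overline{\Sigma(0)}=\langle K_a^{[1]},K_a\rangle$. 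Passing to moduli in the last identity and combining with the first two eliminates $\lambda,\mu$ and forces $\|\Sigma\|^2=(1+r^2)|\Sigma(0)|^2/r^2$, which on substitution collapses to $1-|\eta|^2=t$, i.e. $|\eta|=r$. Write $\eta=re^{i\gamma}$ and $u=1-r^2e^{i\gamma}$.

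For the last step, with $|\eta|=r$ the quantities simplify: $\|\Sigma\|^2=(1+r^2)|u|^{-2}$, $\Sigma(0)=r/u$, and a residue computation on $\partial D$ gives $\langle\Sigma^2,\Sigma\rangle=r(1-r^2-r^4+e^{-i\gamma})(t\,u^2\overline{u})^{-1}$; also $\langle K_a^{[2]},K_a\rangle=2r^2t^{-3}$, $\langle K_a^{[2]},K_a^{[1]}\rangle=2r(2+r^2)t^{-4}$, while $c=\eta(1-b)$ gives $\varphi''(a)/(b-b^2)=2r(1-e^{i\gamma})/t=:W$, so $\beta=W\mu$. The identities $\langle CK_a,CK_a^{[1]}\rangle=\langle K_a^{[1]},K_a\rangle$, $\langle CK_a,CK_a^{[2]}\rangle=\langle K_a^{[2]},K_a\rangle$ and $|\mu|^2\|\Sigma\|^2=\|K_a^{[1]}\|^2$ express $\lambda\overline{\mu}$, $\lambda\overline{\alpha}$ and $|\mu|^2$ through these data, and substituting them into the remaining identity $\langle CK_a^{[1]},CK_a^{[2]}\rangle=\langle K_a^{[2]},K_a^{[1]}\rangle$ reduces it, after cancellation, to $\overline{W}(r^2-e^{i\gamma})=W$, i.e. $(1-e^{-i\gamma})(r^2-e^{i\gamma})=1-e^{i\gamma}$, i.e. $r^2(1-e^{-i\gamma})=0$. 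Since $r\neq0$ this gives $e^{i\gamma}=1$, hence $\eta=r=\overline{a}$, $c=\overline{a}(1-b)$, and $\varphi''(a)=0$, as required. I expect the main obstacle to be exactly this last stretch: the explicit $H^2(D)$ evaluations of $\|\Sigma\|^2$ and $\langle\Sigma^2,\Sigma\rangle$ for the rational function $\Sigma$, and the careful bookkeeping keeping the scalars $\lambda,\mu,\alpha$ under control so that the final identity cancels down to $r^2(1-e^{-i\gamma})=0$.
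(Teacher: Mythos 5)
Your proposal is correct and follows essentially the same route as the paper: conjugate by $\varphi_a$ to make the fixed point $0$, identify the one-dimensional eigenspaces of $C_\varphi$ (powers of the Koenigs-type function $\Sigma$, the paper's $h_j$) and of $C_\varphi^*$ (spanned by $K_a$, $K_a^{[1]}$ and a combination of $K_a^{[2]}$ and $K_a^{[1]}$ --- your $w^*$ is a scalar multiple of the paper's $C_{\varphi_a}^*f_2$), apply Lemma \ref{zh} to match them under $C$, and then use the isometry of $C$ to force first $|\eta|=|a|$ and then $\eta=\overline{a}$. The differences are only in execution: the paper tests the orthogonalized combination $h_2-ah_1$ against $C_{\varphi_a}^*f_2+\overline{\eta}\,C_{\varphi_a}^*f_1$ and compares norms, thereby avoiding $\varphi''(a)$ and the mixed inner products $\langle\Sigma^2,\Sigma\rangle$ and $\langle K_a^{[2]},K_a^{[1]}\rangle$ that your version must compute, and its endgame reads off that $\infty$ is a fixed point of $\varphi$ rather than that $\varphi''(a)=0$.
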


\begin{proof}
Let $\tilde\varphi=\varphi_a\comp\varphi\comp\varphi_a$. Then $\tilde\varphi(0)=0$, hence $\tilde\varphi$ is of the form
$$\tilde\varphi(z)=\frac{bz}{1-cz}.$$
Proposition \ref{31} implies that $\eta=c/(1-b)$ lies in $D$. So $\sigma(z)=z/(1-\eta z)$ belongs to $H^2(D)$.

Since $\varphi$ is not an automorphism, Lemma \ref{tz} shows that $\mathrm{Ker}(C_{\tilde\varphi}-b^j)$ is a subspace of dimension one spanned by $\sigma^j$. On the other hand, by using (\ref{*1}), (\ref{*2}) and (\ref{*3}) one can check that $f_j\in \mathrm{Ker}(C_{\tilde\varphi}^*-\overline{b}^j)$ for $j=0,1,2$. Here $f_0=1$, $f_1=z$, $f_2=z^2-\overline\eta z$.

Now let $h_j=(1-\eta a)^{-1}C_{\varphi_a}\sigma^j$, Since $C_\varphi C_{\varphi_a}=C_{\varphi_a}C_{\tilde\varphi}$, we can conclude that $\mathrm{Ker}(C_\varphi-b^j)$ is a subspace of dimension one spanned by $h_j$. Similarly, we can know that $C_{\varphi_a}^*f_j\in \mathrm{Ker}(C_\varphi^*-\overline{b}^j)$ for $j=1,2,3$. A simple calculation shows that
$$h_j=\left(\frac{a-z}{1-w_0z}\right)^j,$$
where
$$w_0=\varphi_{\overline{a}}(\eta)=\frac{\overline{a}-\eta}{1-a\eta}.$$
Also Lemma \ref{*s} shows that
\begin{align*}
C_{\varphi_a}^*f_0&=e_0;
\\C_{\varphi_a}^*f_1&=e_1-ae_0;
\\C_{\varphi_a}^*f_2&=e_2-(a+\overline{\eta})e_1+\overline{\eta}ae_0,
\end{align*}
where $e_j=K_a\varphi_a^j$ for $j=1,2,3$.

Since $C_\varphi$ is complex symmetric, say $C_\varphi C=CC_\varphi^*$ for some conjugation $C$, Lemma \ref{zh} shows that there exist $\lambda_0, \lambda_1, \lambda_2\in\mathbb{C}$ such that $$Ch_j=\lambda_jC_{\varphi_a}^*f_j$$ for $j=0,1,2$.

Conjugation $C$ is an isometry, so
$$\left|\langle Ch_0,Ch_1\rangle\right|=|\langle h_0,h_1\rangle|=|a|.$$
Whence
\begin{align*}
|a|&=|\langle \lambda_0C_{\varphi_a}^*f_0,\lambda_1C_{\varphi_a}^*f_1\rangle|
\\&=|\lambda_0\lambda_1|\cdot|a|\cdot||e_0||^2
\\&=|\lambda_0\lambda_1|\frac{|a|}{1-|a|^2},
\end{align*}
so $|\lambda_0\lambda_1|=1-|a|^2$. However,
\begin{align*}
|\lambda_0|&=\frac{||Ch_0||}{||C_{\varphi_a}^*f_0||}=\frac{||h_0||}{||e_0||}=\sqrt{1-|a|^2}.
\end{align*}
Therefore $|\lambda_1|=\sqrt{1-|a|^2}$. Then we have
\begin{align}\label{fz}
||h_1||^2&=|\lambda_1|^2||C_{\varphi_a}^*f_1||^2\nonumber
\\&=(1-|a|^2)\frac{1+|a|^2}{1-|a|^2}\nonumber
\\&=1+|a|^2.
\end{align}
On the other hand, we can write $$h_1=\varphi_{\overline{w_0}}+(a-\overline{w_0})K_{\overline{w_0}}.$$ So
\begin{align}\label{fy}
||h_1||^2&=||\varphi_{\overline{w_0}}||^2+|a-\overline{w_0}|^2||K_{\overline{w_0}}||^2\nonumber
\\&=1+\frac{|a-\overline{w_0}|^2}{1-|w_0|^2}.
\end{align}

Combining (\ref{fz}) and (\ref{fy}), we have
$$|a|^2=\frac{|a-\overline{w_0}|^2}{1-|w_0|^2}.$$
Notice that $w_0=\varphi_{\overline{a}}(\eta)$, which also means $\varphi_{\overline{a}}(w_0)=\eta$. So by using identity (\ref{id}) we can get
\begin{align*}
|a|^2&=\frac{|a-\overline{w_0}|^2}{1-|w_0|^2}
\\&=|\overline{a}-w_0|^2\frac{1-|a|^2}{(1-|\eta|^2)|1-aw_0|^2}
\\&=\frac{1-|a|^2}{1-|\eta|^2}|\eta|^2.
\end{align*}
Thus we can know that $|\eta|=|a|$.

Now we turn to investigate the relationship between $h_2$ and $C_{\varphi_a}^*f_2$. It would be helpful if one could note that
$$\langle C_{\varphi_a}^*f_2+\overline{\eta}C_{\varphi_a}^*f_1,C_{\varphi_a}^*f_0\rangle=0$$
and
$$\langle h_2-ah_1,h_0\rangle=0.$$
Therefore, again by Lemma \ref{zh} we can find $\lambda\in\mathbb{C}$ such that
$$C(h_2-ah_1)=\lambda \left(C_{\varphi_a}^*f_2+\overline{\eta}C_{\varphi_a}^*f_1\right),$$
or we can write
$$\lambda_2C_{\varphi_a}^*f_2-\overline{a}\lambda_1C_{\varphi_a}^*f_1=\lambda C_{\varphi_a}^*f_2+\lambda\overline{\eta}C_{\varphi_a}^*f_1.$$
So $-\overline{a}\lambda_1=\lambda\overline{\eta}$, hence $$|\lambda|=|\lambda_1|=\sqrt{1-|a|^2}.$$ Thus we have
\begin{align}\label{f2z}
||h_2-ah_1||^2&=|\lambda|^2\cdot||C_{\varphi_a}^*f_2+\overline{\eta}C_{\varphi_a}^*f_1||^2\nonumber
\\&=(1-|a|^2)||e_2-ae_1||^2\nonumber
\\&=1+|a|^2.
\end{align}

On the other hand,
\begin{align*}
h_2(z)-ah_1(z)=(1-aw_0)\frac{z(z-a)}{(1-w_0z)^2}.
\end{align*}
Let $$\tilde{h}=(1-aw_0)\frac{z-a}{(1-w_0z)^2},$$
then $h_2(z)-ah_1(z)=z\tilde{h}(z)$, hence $||h_2-ah_1||=||\tilde{h}||$. A calculation shows that
$$\tilde{h}=\gamma_1K_{\overline{w_0}}+\gamma_2 K_{\overline{w_0}}\varphi_{\overline{w_0}},$$
where
\begin{align*}
\gamma_1&=-\frac{(1-aw_0)(a-\overline{w_0})}{1-|w_0|^2};\\
\gamma_2&=-\frac{(1-aw_0)^2}{1-|w_0|^2}.
\end{align*}
Again by using (\ref{id}) and noticing that $$|\eta|=\left|\frac{a-\overline{w_0}}{1-aw_0}\right|,$$ we have
\begin{align*}
|\gamma_1|&=\frac{|1-aw_0||a-\overline{w_0}|}{1-|w_0|^2}
\\&=|\eta|\frac{|1-aw_0|^2}{1-|w_0|^2}
\\&=|\eta|\frac{1-|a|^2}{1-|\eta|^2}=|a|,
\end{align*}
and
\begin{align*}
|\gamma_2|&=\frac{|1-aw_0|^2}{1-|w_0|^2}
\\&=\frac{1-|a|^2}{1-|\eta|^2}=1.
\end{align*}
So
\begin{align}\label{f2y}
||h_2-ah_1||^2&=(|\gamma_1|^2+|\gamma_2|^2)\frac{1}{1-|w_0|^2}\nonumber
\\&=\frac{1+|a|^2}{1-|w_0|^2}.
\end{align}
Combining (\ref{f2z}) with (\ref{f2y}) we get $w_0=0$, which means that $\eta=\overline{a}$. Thus
$$\varphi_a(z)=\frac{\overline{\eta}-z}{1-\eta z}.$$
So $\varphi_a$, as a member in $LFT(\hat{\mathbb{C}})$, maps $1/\eta$ to $\infty$. Note that $1/\eta$ is one of the fixed points of $\tilde\varphi$, therefore $\infty$ is a fixed point of $\varphi=\varphi_a\comp\tilde\varphi\comp\varphi_a$. Thus $\varphi$ is a polynomial of degree one.
\\
\qedhere
\end{proof}

As a conclusion, we can now get our Main Result B as follows.

\begin{theorem}\label{main2}
Suppose $\varphi\in LFT(D)$ is not a constant. Then $C_\varphi$ is complex symmetric on $H^2(D)$ if and only if at least one of the following conditions holds,
\\
(i) the two fixed points of $\varphi$ on $\hat{\mathbb{C}}$ are $0$ and a point outside $\overline{D}$;
\\
(ii) the two fixed points of $\varphi$ on $\hat{\mathbb{C}}$ are $\infty$ and a point in $D$;
\\
(iii) $\varphi$ is an involutive automorphism.
\end{theorem}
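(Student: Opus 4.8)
The plan is to assemble Main Result B from the pieces already proved, handling the two implications separately and splitting the forward direction according to where the fixed points of $\varphi$ lie. For the ``if'' direction: if (iii) holds then $C_\varphi^2=I$, so $C_\varphi$ satisfies a polynomial equation of degree $2$ and is complex symmetric by the result of \cite{Cs3} quoted in the introduction. If (i) holds, then $\varphi$ fixes $0$; conjugating to move $0$ to $0$ is unnecessary, and Proposition \ref{he} (equivalently Proposition \ref{state}) gives complex symmetry precisely because the second fixed point lies outside $\overline{D}$, hence $\varphi$ has no fixed point on $\partial D$. If (ii) holds, then $\varphi$ fixes $\infty$, i.e.\ $\varphi(z)=az+c$ is affine; writing $\varphi=\varphi_a\comp\tilde\varphi\comp\varphi_a$ where $\tilde\varphi$ fixes $0$, the finite fixed point of $\varphi$ lying in $D$ forces (after the automorphic conjugation $\varphi_a$) the relevant second fixed point of $\tilde\varphi$ outside $\overline{D}$, and again Proposition \ref{he} applies to $\tilde\varphi$; since $C_{\varphi_a}$ is invertible and $C_\varphi=C_{\varphi_a}C_{\tilde\varphi}C_{\varphi_a}$ is similar to $C_{\tilde\varphi}$ via a unitary-up-to-scalar, one transfers the conjugation. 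In fact the cleanest route for (i) and (ii) is just to invoke Proposition \ref{he} directly after noting that in both cases $\varphi$ has a fixed point ($0$ or $\infty$) at an ``endpoint'' and no fixed point on $\partial D$.

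For the ``only if'' direction, assume $C_\varphi$ is complex symmetric. By Proposition \ref{propa}, $\varphi$ has a fixed point in $D$; call it $p$. If $\varphi$ is an automorphism, Corollary \ref{last} says $\varphi$ is a rotation or an involutive automorphism. A rotation fixes $0$ and $\infty$; $0$ and $\infty$ each lie in the degenerate ``outside $\overline{D}$'' / ``in $D$'' slots, so conditions (i) and (ii) both hold (take the rotation with fixed points $0,\infty$: then (i) holds with the point outside $\overline{D}$ being $\infty$, and (ii) holds with the point in $D$ being $0$). An involutive automorphism is case (iii). So suppose $\varphi$ is not an automorphism. If $p=0$, then Corollary \ref{yg} shows the second fixed point of $\varphi$ lies outside $\overline{D}$, giving (i). If $p=a\ne 0$, then Theorem \ref{final} shows $\varphi$ is affine, i.e.\ fixes $\infty$, and its finite fixed point is $a\in D$, giving (ii).

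The only genuinely delicate point is bookkeeping about the degenerate cases and making sure the three conditions are stated so that rotations and the identity are covered — e.g.\ the identity is an ``involutive automorphism'' only in a degenerate reading, but it is a rotation and also falls under (i) and (ii), so no case is missed. One should also double-check that ``$0$ and a point outside $\overline{D}$'' in (i) is exactly the hypothesis needed for Proposition \ref{he} in the $\varphi(0)=0$ branch (it is: the other fixed point outside $\overline{D}$ is equivalent to no boundary fixed point when one fixed point is already the interior point $0$), and symmetrically for (ii) after the $\varphi_a$-conjugation. There are no hard estimates here; the entire theorem is a synthesis of Proposition \ref{propa}, Proposition \ref{he}, Corollary \ref{yg}, Corollary \ref{last}, and Theorem \ref{final}, together with the elementary observation that conjugating $\varphi$ by the involution $\varphi_a$ induces an invertible operator $C_{\varphi_a}$ that carries the conjugation for $C_{\tilde\varphi}$ to one for $C_\varphi$. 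I would write the proof as a short case analysis invoking these five results in turn.

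\begin{proof}
We first prove sufficiency. If (iii) holds then $C_\varphi^2=I$, so $C_\varphi$ satisfies a polynomial equation of degree $2$ and is therefore complex symmetric on $H^2(D)$ by Theorem 2 in \cite{Cs3}. If (i) holds then $\varphi(0)=0$ and the second fixed point of $\varphi$ lies outside $\overline D$, so $\varphi$ has no fixed point on $\partial D$; since $\varphi$ is not an automorphism (its fixed points are not both on $\partial D$ and not a single interior point only unless it is the identity, which is covered by (iii)), Proposition \ref{he} shows $C_\varphi$ is complex symmetric. If (ii) holds, write $\tilde\varphi=\varphi_a\comp\varphi\comp\varphi_a$ where $a\in D$ is the interior fixed point of $\varphi$; then $\tilde\varphi(0)=0$, and since $\varphi_a$ maps the second fixed point $\infty$ of $\varphi$ to $\overline a$, which lies outside... more precisely, because $\varphi$ has no fixed point on $\partial D$ neither does $\tilde\varphi$, so by Proposition \ref{he} the operator $C_{\tilde\varphi}$ is $\tilde C$-symmetric for some conjugation $\tilde C$. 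Set $U=(1-|a|^2)^{-1/2}C_{\varphi_a}$; then $U$ is unitary and $C_\varphi=U C_{\tilde\varphi}U^{-1}$, so $C_\varphi$ is complex symmetric with respect to the conjugation $C=U\tilde C U^{-1}$.

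Now we prove necessity. Suppose $C_\varphi$ is complex symmetric on $H^2(D)$. By Proposition \ref{propa}, $\varphi$ has a fixed point $p\in D$. If $\varphi$ is an automorphism, then by Corollary \ref{last} it is either a rotation or an involutive automorphism; an involutive automorphism is case (iii), and a rotation has fixed points $0$ and $\infty$, so both (i) and (ii) hold (with the point outside $\overline D$ equal to $\infty$ and the point in $D$ equal to $0$). If $\varphi$ is not an automorphism and $p=0$, then Corollary \ref{yg} shows the other fixed point of $\varphi$ on $\hat{\mathbb C}$ lies outside $\overline D$, which is case (i). If $\varphi$ is not an automorphism and $p=a\in D\backslash\{0\}$, then Theorem \ref{final} shows $\varphi$ is a polynomial of degree one, i.e.\ $\varphi$ fixes $\infty$ on $\hat{\mathbb C}$; its other fixed point is $a\in D$, which is case (ii). This exhausts all possibilities, completing the proof.
\\
\qedhere
\end{proof}
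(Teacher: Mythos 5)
Your overall synthesis is the same as the paper's: necessity is obtained exactly as in the paper by splitting into the automorphism case (Corollary \ref{last}), the non-automorphism case fixing $0$ (Corollary \ref{yg} / Proposition \ref{he}), and the non-automorphism case fixing $a\ne 0$ (Theorem \ref{final} plus Proposition \ref{he}). That part is fine. The problems are in your sufficiency argument, which the paper leaves implicit but which you chose to write out, and there one step is genuinely wrong.

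For case (ii) you set $U=(1-|a|^2)^{-1/2}C_{\varphi_a}$ and assert $U$ is unitary. It is not: $C_{\varphi_a}$ is invertible but is not a scalar multiple of an isometry (for instance $\|C_{\varphi_a}1\|=1$ while $\|C_{\varphi_a}\|=\bigl(\tfrac{1+|a|}{1-|a|}\bigr)^{1/2}>1$ when $a\ne 0$; the unitary object is the \emph{weighted} operator $f\mapsto k_a\cdot(f\circ\varphi_a)$ with $k_a$ the normalized kernel, not $C_{\varphi_a}$ itself). Since complex symmetry is preserved under unitary equivalence but not under mere similarity, the claimed transfer of the conjugation $\tilde C$ to $C=U\tilde CU^{-1}$ breaks down ($U\tilde CU^{-1}$ need not be an isometric involution). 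The whole detour is unnecessary: Proposition \ref{he} is stated for $\varphi$ fixing \emph{either} $0$ \emph{or} $\infty$, and in case (ii) the map $\varphi(z)=az+c$ fixes $\infty$ with its other fixed point in $D$, hence has no fixed point on $\partial D$, so Proposition \ref{he} applies directly. A second, smaller slip: in case (i) you assert $\varphi$ is not an automorphism, but a nontrivial rotation satisfies (i) (fixed points $0$ and $\infty$) and is an automorphism, so Proposition \ref{he} does not cover it; you must dispose of that subcase separately by noting that an automorphism fixing $0$ is a rotation and $C_\varphi$ is then normal, hence complex symmetric. With those two repairs the proof is correct and coincides in substance with the paper's.
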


\begin{proof}
If $\varphi$ is an automorphism of $D$, then by Corollary \ref{last}, $\varphi$ satisfies (i) or (iii).

If $\varphi$ is not an automorphism of $D$ and $\varphi(0)=0$, then by Proposition \ref{he}, $\varphi$ satisfies (i).

If $\varphi$ is not an automorphism of $D$ and the fixed point of $\varphi$ in $D$ is not zero, then Theorem \ref{final} shows that $\infty$ is a fixed point of $\varphi$. Thus by Proposition \ref{he}, $\varphi$ satisfies (ii).
\\
\qedhere
\end{proof}

The theorems in this paper give a complete description of complex symmetric composition operators whose symbols are linear fractional. However, people know little about the case when $\varphi$ is not linear fractional. Even no positive example has been found. We list it here as an question.

\begin{open}
Is there any complex symmetric composition operator whose symbol is not linear fractional?
\end{open}

\end{document}